\theoremstyle{plain}
\newtheorem{thm}{Theorem}
\newtheorem{lem}[thm]{Lemma}
\newtheorem{prop}[thm]{Proposition}
\newtheorem{conj}[thm]{Conjecture}
\theoremstyle{definition}
\newtheorem{dfn}[thm]{Definition}
\newtheorem{ex}[thm]{Example}
\theoremstyle{remark}
\newtheorem{rmk}[thm]{Remark}
\newcommand{\veps}{\varepsilon}
\newcommand*{\df}{\mathrel{\vcenter{\baselineskip0.5ex \lineskiplimit0pt
                     \hbox{\scriptsize.}\hbox{\scriptsize.}}} =}
\providecommand{\abs}[1]{\left\lvert#1\right\rvert}
\newcommand{\QQ}{\mathbf{Q}}
\newcommand{\ZZ}{\mathbf{Z}}
\newcommand{\floor}[1]{\left\lfloor#1\right\rfloor}
\begin{document}
%Title
\title[Densities of bounded primes for hypergeometric series]{Densities of bounded primes for hypergeometric series with rational parameters}
\author[Franc, Gill, Goertzen, Pas, Tu]{Cameron Franc \and Brandon Gill \and Jason Goertzen \and Jarrod Pas \and Frankie Tu}
\thanks{Corresponding author: \url{franc@math.usask.ca}. Cameron Franc was partially supported by NSERC Discovery grant RGPIN-2017-06156.}
\date{}

\begin{abstract}
  The set of primes where a hypergeomeric series with rational parameters is $p$-adically bounded is known by \cite{FrancGannonMason} to have a Dirichlet density. We establish a formula for this Dirichlet density and conjecture that it is rare for the density to be large. We prove this conjecture for hypergeometric series whose parameters have denominators equal to a prime of the form $p = 2q^r+1$, where $q$ is an odd prime, by establishing an upper bound on the density of bounded primes in this case. The general case remains open. This paper is the output of an undergraduate research course taught by the first listed author in the winter semester of 2018.
\end{abstract}
\maketitle

\tableofcontents
\setcounter{tocdepth}{2}

\section{Introduction}
The study of the coefficients of hypergeometric series has a long history. A basic question is to determine when a given series $_2F_1(a,b;c)$ with rational parameters $a$, $b$ and $c$ has integer coefficients, or perhaps more naturally, at most finitely many primes appearing in the denominators of its coefficients. This question was settled long ago by Schwarz when he classified in his famous list the series satisfying a hypergeometric differential equation with finite monodromy group. More recently, Beukers-Heckman generalized this result to series $_nF_{n-1}$ in \cite{BeukersHeckman}. In the intervening time, deep connections have been made between the arithmetic of the coefficients of $_nF_{n-1}$, quotient singularities, and the Riemann hypothesis --- see \cite{Borisov}, \cite{Bober}, \cite{RodriguezVillegas} for an introduction to this subject.

Dwork intitiated a deep study of the $p$-adic properties of hypergeometric series (see for example his books \cite{Dwork1} and \cite{Dwork2}). One question that he addressed was the $p$-adic boundedness of hypergeometric series. In \cite{Christol}, Christol gave a necessary and sufficient condition for a given hypergeometric series $_nF_{n-1}$ to be $p$-adically bounded. Recently, these studies of the congruence and integrality properties of hypergeometric series have found applications in \cite{DelaygueRivoalRoques} in the study of integrality properties of hypergeometric mirror maps. Thus, in spite of its long history, the subject of the arithmetic of hypergeometric series remains of interest.

In \cite{FrancGannonMason} a new necessary and sufficient condition for the $p$-adic boundedness of a series $_2F_1$ with rational parameters was introduced, and it was used to show that the set of bounded primes for a given series is (with finitely many exceptions) a union of primes in certain arithmetic progressions. These results have been generalized to $_nF_{n-1}$ in Tobias Bernstein's Master's thesis at the University of Alberta. The present paper continues the line of investigation opened in \cite{FrancGannonMason} toward a more global understanding of the arithmetic of hypergeometric series. Our first main result is Theorem \ref{t:density}, which reformulates the $p$-adic necessary and sufficient condition for boundedness from \cite{FrancGannonMason} into a global condition that more closely resembles the classification of hypergeometric equations with finite monodromy from \cite{BeukersHeckman} (and which goes back to work of Landau \cite{Landau} in the case of $_2F_1$). Theorem \ref{t:density} states that for all but finitely many primes, $_2F_1(a,b;c)$ is $p$-adically bounded if and only if $p$ is congruent to an element of
\[
  B(a,b;c) =\{u \in (\ZZ/m\ZZ)^\times \mid \textrm{ for all }j \in \ZZ,~\{-u^jc\} \leq \max(\{-u^ja\} ,\{-u^jb\})\}
\]
where $m$ is the least common multiple of the denominators of $a$, $b$ and $c$. 

In Section \ref{s:conjecture} we turn to the question of how the density of bounded primes behaves on average. The expectation is that it should be rare for this density to be large. For example, the Schwarz list is quite sparse among all hypergeometric series, and it is the list of series such that the density of bounded primes is equal to one. At the other end of the spectrum, \cite{FrancGannonMason} showed that $_2F_1(a,b;c)$ has a zero density of bounded primes one-third of the time. More precisely, if $a$, $b$ and $c$ are normalized to lie in the interval $(0,1)$, then the density is zero precisely when $c$ is smaller than $a$ and $b$. Conjecture \ref{mainconj} is a precise formulation of the expectation that the density of bounded primes should usually be small, and we end Section \ref{s:conjecture} with computational evidence tabulating densities of hypergeometric series with parameters of height at most $64$.

The final Section \ref{s:evidence} provides evidence that Conjecture \ref{mainconj} is true in the form of an upper bound on the densities of bounded primes of certain hypergeometric series with restricted parameters. Theorem \ref{t:specialcase} proves the following: if $p$ is a large prime of the form $p = 2q^r+1$, where $q$ is another odd prime and $r \geq 1$, then the density $D$ of bounded primes for a series
\[
  _2F_1\left(\frac{x}{p},\frac{y}{p};\frac{z}{p}\right)
\]
satisfies
\[D \leq \frac{1}{q}.\]
In particular, the density of bounded primes goes to zero if $q$ grows.

Computations suggest that $D\leq 1/q$ whenever $p \geq 59$, although we do not prove this slightly stronger result unless $p \equiv 3 \pmod{8}$. Our proof of Theorem \ref{t:specialcase} relies crucially on previously known bounds on the smallest positive nonquadratic residue mod $p$. Effective versions of our results would follow from effective versions of such upper bounds, but we do not pursue this in the present paper.

\emph{Acknowledgements}. This paper is the output of an undergraduate research course taught by Cameron Franc at the University of Saskatchewan in the winter semester of 2018. The other authors of the paper are undergraduate students that took the course. We thank Chris Soteros and Jacek Szmigielski for helping us create this course.

\section{A formula for the density of bounded primes}
\label{s:formula}
Recall that hypergeometric series $_2F_1$ are defined as power series
\[
  _2F_1(a,b;c) = \sum_{n\geq 0}\frac{(a)_n(b)_n}{(c)_n}\frac{X^n}{n!}
\]
where $(a)_n = a(a+1)\cdots (a+n-1)$ is the rising factorial. When the hypergeometric parameters $(a,b;c)$ are rational, then $_2F_1(a,b;c)$ has rational coefficients that have been the subject of considerable investigation.
\begin{dfn}
If $p$ is a prime and if $F = \sum_{n\geq 0}a_nX^n$ is a power series with $a_n \in \QQ$, then $F$ is said to be \emph{$p$-adically bounded} provided that its coefficients $a_n$ are bounded in the $p$-adic topology. Equivalently, the power of $p$ dividing the denominator of any coefficient $a_n$ is bounded from above independently of $n$. A prime $p$ is said to be \emph{bounded} for $F$ if $F$ is $p$-adically bounded.
\end{dfn}
In \cite{FrancGannonMason} it was shown that the set of bounded primes for a given $_2F_1(a,b;c)$ with rational parameters always has a Dirichlet density. Our first goal is to describe a formula for the density of bounded primes for a given hypergeometric series with rational parameters. This is achieved in Theorem \ref{t:density} below. As in \cite{FrancGannonMason}, there is little harm in assuming that the parameters $a$, $b$ and $c$ satisfy $0 < a,b,c < 1$ and $c \neq a,b$, and so we do so throughout the paper. Recall that thanks to our normalization, if $p$ is a prime such that $a-1$ is a $p$-adic unit, then $a-1$ has a perfectly periodic $p$-adic expansion of period $M$ equal to the order of $p$ in $(\ZZ/d\ZZ)^\times$, where $d$ is the denominator of $a-1$ (Lemma 2.1 of \cite{FrancGannonMason}). Let $a_j(p)$ denote the $j$th $p$-adic digit of $a-1$, and define $b_j(p)$ and $c_j(p)$ similarly. Lemma 2.3 of \cite{FrancGannonMason} showed that
\begin{equation}
\label{eq:digits}
  a_j(p) = \left \lfloor \left \{ -p^{M-1-j}a\right\}p\right \rfloor.
\end{equation}
where $\floor{x}$ denotes the floor function and $\{x\} = x - \floor{x}$. In particular,
\begin{equation}
  \label{eq:limit}
  \lim_{p\equiv u \pmod{d}} \frac{a_j(p)}{p} = \{-u^{d-1-j}a\}
\end{equation}
where the limit varies over primes $p$ within a fixed congruence class $u \pmod{d}$. Similar formulas hold for the digits of $b-1$ and $c-1$.

\begin{ex}
  Below we plot the $p$-adic digits of $-3/11$ for primes $p$ satisfying  $p\equiv 2 \pmod{11}$, which corresponds to $a = 8/11$ above. Since the order of $2$ in $(\ZZ/11\ZZ)^\times$ is $5$, the digits are periodic of period $5$. In the last column we print the digits divided by $p$ as real numbers, up to four decimal places of accuracy, to demonstrate the convergence in Equation \eqref{eq:limit}. In the last row we print the limit of the normalized digits.
  \begin{center}
    {\renewcommand{\arraystretch}{1.4}
      \begin{tabular}{c|c|c}
      $p \equiv 2\pmod{11}$ & Digits of $-3/11$ & Normalized digits\\
        \hline
        $ 13 $ & $ (8, 10, 11, 5, 9) $ & $ (0.6154, 0.7692, 0.8462,
0.3846, 0.6923) $ \\
$ 79 $ & $ (50, 64, 71, 35, 57) $ & $ (0.6329, 0.8101, 0.8987,
0.4430, 0.7215) $ \\
$ 101 $ & $ (64, 82, 91, 45, 73) $ & $ (0.6337, 0.8119, 0.9010,
0.4455, 0.7228) $ \\
$ 167 $ & $ (106, 136, 151, 75, 121) $ & $ (0.6347, 0.8144,
0.9042, 0.4491, 0.7245) $ \\
$ 211 $ & $ (134, 172, 191, 95, 153) $ & $ (0.6351, 0.8152,
0.9052, 0.4502, 0.7251) $ \\
$ 233 $ & $ (148, 190, 211, 105, 169) $ & $ (0.6352, 0.8155,
0.9056, 0.4506, 0.7253) $ \\
$ 277 $ & $ (176, 226, 251, 125, 201) $ & $ (0.6354, 0.8159,
0.9061, 0.4513, 0.7256) $\\
$ 409 $ & $ (260, 334, 371, 185, 297) $ & $ (0.6357, 0.8166,
0.9071, 0.4523, 0.7262) $\\
        \hline
        $p\to \infty$&& $(7/11, 9/11, 10/11, 5/11, 8/11)$
      \end{tabular}}
  \end{center}
\end{ex}

Recall the following result from \cite{FrancGannonMason}:
\begin{thm}
  \label{t:FGM}
  Let $a$, $b$ and $c$ denote rational numbers satisfying $0 < a,b,c < 1$ and $c\neq a,b$. Let $p$ be a prime greater than the least common multiple of the denominators of $a-1$, $b-1$ and $c-1$. Then the hypergeometric series $_2F_1(a,b;c;z)$ has $p$-adically bounded coefficients if and only if for every index $j$ we have
  \[
  c_j(p) \leq \max (a_j(p),b_j(p)).
  \]
\end{thm}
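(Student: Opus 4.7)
The plan is to reduce $p$-adic boundedness of the coefficients $a_n = (a)_n(b)_n/((c)_n\, n!)$ to a combinatorial condition on carries in base-$p$ addition. Write $A = a-1$, $B = b-1$, $C = c-1$, all in $\Zp$ since $p$ exceeds the denominators of $a$, $b$, $c$. Then
\[
a_n = \binom{A+n}{n}\binom{B+n}{n}\binom{C+n}{n}^{-1}.
\]
An extension of Kummer's theorem to $p$-adic integers gives $v_p\binom{X+n}{n} = \kappa_X(n)$, where $\kappa_X(n)$ counts the carries produced when $X$ and $n$ are added in base $p$; this is verified by truncating $X$ modulo a large power of $p$ and invoking the classical statement. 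Therefore $v_p(a_n) = \kappa_A(n) + \kappa_B(n) - \kappa_C(n)$, and $_2F_1(a,b;c)$ is $p$-adically bounded if and only if $\kappa_C(n) - \kappa_A(n) - \kappa_B(n)$ stays bounded above as $n$ ranges over $\ZZ_{\geq 0}$.

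For the necessary direction I would argue by contrapositive. Assume $c_{j_0}(p) > \max(a_{j_0}(p), b_{j_0}(p))$ for some $j_0 \in \{0,\ldots,M-1\}$. Using $M$-periodicity of the digit sequences (Lemma 2.1 of \cite{FrancGannonMason}), build $n$ as a sum of a single well-chosen digit placed at positions $j_0, j_0 + M, j_0 + 2M, \ldots$; the strict inequality at $j_0$ lets me pick that digit so that at each such position the addition $C + n$ triggers a carry whereas $A + n$ and $B + n$ do not. Letting the number of shifts grow, $\kappa_C(n) - \kappa_A(n) - \kappa_B(n) \to \infty$, so $v_p(a_n) \to -\infty$ and the series is unbounded.

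For the sufficient direction, assume $c_j(p) \leq \max(a_j(p), b_j(p))$ for every $j$ and run the three additions $A + n$, $B + n$, $C + n$ in parallel. The goal is the carry-count inequality $\kappa_C(n) \leq \max(\kappa_A(n), \kappa_B(n)) + O(M)$, which upgrades the digit-wise hypothesis to a global bound. A coupling argument, combined with $M$-periodicity to control the length over which the three additions' incoming carries can disagree, yields this bound; hence $v_p(a_n)$ is bounded from below.

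The principal obstacle is the bookkeeping of carry propagation. A digit-wise inequality does not immediately imply a carry-wise inequality, because at a single position the incoming carries to $A+n$, $B+n$, $C+n$ may all differ, so a position where $c_j(p) \leq \max(a_j(p), b_j(p))$ can still yield a carry for $C+n$ that neither $A+n$ nor $B+n$ produces. Periodicity is essential on two fronts: it bounds the length over which such discrepancies can persist in the sufficient direction, and it makes the iterative construction of unbounded test values $n$ possible in the necessary direction.
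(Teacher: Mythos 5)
Your reduction to carry counts is fine: with $A=a-1$, $B=b-1$, $C=c-1\in\Zp^{\times}$ the coefficient is $\binom{A+n}{n}\binom{B+n}{n}\binom{C+n}{n}^{-1}$, the $p$-adic Kummer formula $v_p\binom{X+n}{n}=\kappa_X(n)$ is legitimate, and your contrapositive for necessity works (take $\delta=p-c_{j_0}(p)$ and $n=\delta\sum_{i<k}p^{j_0+iM}$, so $\kappa_C(n)\geq k$ while $\kappa_A(n)=\kappa_B(n)=0$). The gap is in the sufficiency direction: the purely combinatorial lemma you lean on --- that the digit-wise inequality $c_j(p)\leq\max(a_j(p),b_j(p))$ plus $M$-periodicity forces $\kappa_C(n)\leq\max(\kappa_A(n),\kappa_B(n))+O(M)$ via a coupling argument --- is false for general periodic digit sequences, so no bookkeeping of carry propagation alone can deliver it. Concretely, take period $4$ and repeating digit patterns (positions $0,1,2,3$) $A:(p-1,0,p-1,0)$, $B:(0,p-1,0,0)$, $C:(p-1,p-1,p-1,0)$. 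Then $c_j\leq\max(a_j,b_j)$ at every position, yet for $n=\sum_{i=0}^{k-1}p^{4i}$ one computes $\kappa_C(n)=3k$, $\kappa_A(n)=k$, $\kappa_B(n)=0$, so $\kappa_C(n)-\kappa_A(n)-\kappa_B(n)=2k$ is unbounded. The carry discrepancy recurs in every period, so periodicity does not confine it to a window of length $O(M)$; it accumulates linearly.

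What rescues the theorem is the hypothesis $p>m$ (the lcm of the denominators), which your sufficiency argument never uses quantitatively. The patterns above arise from parameters such as $a=p/(p+1)$, whose denominators exceed $p$; indeed, when the denominators are less than $p$, formula \eqref{eq:digits} shows a digit can never equal $p-1$, and more importantly the digits are the specific numbers $\floor{\{-p^{M-1-j}a\}p}$, so (exactly as in the proof of Theorem \ref{t:density}) the digit-wise comparison is equivalent to the comparison of fractional parts, equivalently of the truncations $A\bmod p^{j+1}$, $B\bmod p^{j+1}$, $C\bmod p^{j+1}$. Since a carry out of position $j$ in $X+n$ occurs precisely when $n\bmod p^{j+1}\geq p^{j+1}-(X\bmod p^{j+1})$, the truncation-wise inequality gives, position by position, that a carry for $C+n$ entails a carry for $A+n$ or for $B+n$, hence $\kappa_C(n)\leq\kappa_A(n)+\kappa_B(n)$ for every $n$, with no $O(M)$ loss and no coupling. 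Rewriting your sufficiency step through truncations rather than single digits, and invoking $p>m$ together with $c\neq a,b$ at the point where the two comparisons are identified, closes the gap; note the paper itself does not reprove this statement but cites Theorem 3.4 of \cite{FrancGannonMason}, whose argument proceeds along these arithmetic lines rather than by a digit-only combinatorial bound.
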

\begin{proof}
See Theorem 3.4 of \cite{FrancGannonMason}.
\end{proof}

Note that since the $p$-adic expansions of $a-1$, $b-1$ and $c-1$ are all periodic, the condition in Theorem \ref{t:FGM} only needs to be checked for a finite number of indices $j$.

\begin{thm}
\label{t:density}
  Let $a$, $b$ and $c$ denote three rational numbers satisfying $0 < a,b,c < 1$ and $c \neq a,b$. Let $m$ denote the least common multiple of the denominators of $a-1$, $b-1$ and $c-1$ when written in lowest terms, and define
\[
  B(a,b;c) = \{u \in (\ZZ/m\ZZ)^\times \mid \textrm{ for all }j \in \ZZ,~\{-u^jc\} \leq \max(\{-u^ja\} ,\{-u^jb\})\}.
\]
Then for all primes $p>m$, the series $_2F_1(a,b;c;z)$ is $p$-adically bounded if and only if $p$ is congruent to an element of $B(a,b;c)$ mod $m$. Thus, the density of the set of bounded primes for $_2F_1(a,b;c;z)$ is
\[
  D(a,b;c) = \frac{\abs{B(a,b;c)}}{\phi(m)}.
\]
\end{thm}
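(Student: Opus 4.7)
The plan is to apply Theorem~\ref{t:FGM} and translate its pointwise digit condition into a condition depending only on $u := p \bmod m$, then invoke Dirichlet's theorem.

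Fix $u \in (\ZZ/m\ZZ)^\times$ and a prime $p > m$ with $p \equiv u \pmod m$. Since the denominators $d_a, d_b, d_c$ of $a, b, c$ all divide $m$, the fractional part $\{-p^e a\}$ depends only on $e$ modulo $d_a$ (or equivalently modulo the period $M_a$ of the $p$-adic expansion of $a-1$) and equals $\{-u^e a\}$; similarly for $b, c$. Using $M_a - 1 - j \equiv -j - 1 \pmod{M_a}$, formula~\eqref{eq:digits} can be rewritten as
\[
a_j(p) = \bigl\lfloor \{-u^{-j-1}a\}\, p \bigr\rfloor, \qquad b_j(p) = \bigl\lfloor \{-u^{-j-1}b\}\, p \bigr\rfloor, \qquad c_j(p) = \bigl\lfloor \{-u^{-j-1}c\}\, p \bigr\rfloor,
\]
where $u^{-1}$ is computed in $(\ZZ/m\ZZ)^\times$. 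The uniformity of the exponent $-j-1$ across all three series is what makes everything compatible.

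The next step is an elementary floor-function lemma: for rationals $\alpha, \beta \in [0,1)$ whose denominators divide $m$, and for any prime $p > m$, we have $\alpha \leq \beta$ if and only if $\lfloor \alpha p\rfloor \leq \lfloor \beta p\rfloor$. The forward direction is trivial; for the converse, if $\alpha > \beta$ then $\alpha - \beta \geq 1/m$ and hence $(\alpha - \beta)p > 1$, which forces the integer parts to be strictly ordered. Applying this lemma term-by-term (and using that the maximum of floors equals the floor of the maximum, when the candidates have denominators dividing $m$), the condition of Theorem~\ref{t:FGM} is equivalent, for any prime $p > m$ in the congruence class $u$, to
\[
\{-u^{-j-1}c\} \leq \max\bigl(\{-u^{-j-1}a\},\, \{-u^{-j-1}b\}\bigr) \qquad \text{for every } j \geq 0.
\]
As $j$ varies, $-j-1$ runs over all residues modulo the order of $u$ in $(\ZZ/m\ZZ)^\times$, so this is precisely the condition $u \in B(a,b;c)$.

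Having established that, for every prime $p > m$, the series $_2F_1(a,b;c;z)$ is $p$-adically bounded if and only if $p \bmod m \in B(a,b;c)$, the density formula follows from Dirichlet's theorem on primes in arithmetic progressions: each residue class modulo $m$ in $(\ZZ/m\ZZ)^\times$ carries Dirichlet density $1/\phi(m)$, and summing over $u \in B(a,b;c)$ yields $D(a,b;c) = |B(a,b;c)|/\phi(m)$. The only technical step is the floor-function lemma, whose hypothesis $p > m$ matches the hypothesis of Theorem~\ref{t:FGM}; everything else is a rewriting of~\eqref{eq:digits} to make explicit that $a_j(p)$ depends on $p$ only through $u = p \bmod m$ and the scaling factor $p$ itself.
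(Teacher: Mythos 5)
Your proposal is correct and follows essentially the same route as the paper: reduce via Theorem \ref{t:FGM} and Equation \eqref{eq:digits} to comparing $\lfloor \alpha p\rfloor$ with $\lfloor \beta p\rfloor$ for fractions $\alpha,\beta$ whose denominators divide $m$, show this is equivalent to $\alpha \leq \beta$ once $p > m$, and then apply Dirichlet. The only cosmetic difference is in the converse direction of that comparison: you use the gap $\alpha-\beta \geq 1/m$, so $(\alpha-\beta)p>1$, to force strict inequality of the floors, whereas the paper assumes the floors are equal and derives the contradiction $A=C$ (using $c \neq a,b$); both rest on the same estimate.
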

\begin{proof}
  Morally, this follows immediately from Theorem \ref{t:FGM} by applying the limit formula of Equation \eqref{eq:limit}. However, we want to ensure that Theorem \ref{t:density} is true exactly for the primes $p$ satisfying $p > m$, so a little more work is necessary.

  Fix an index $j$. We can write $\{-p^jc\} = C/m$, $\{-p^ja\} = A/m$ and $\{-p^jb\} = B/m$ for integers $A,B,C$ strictly betwen $0$ and $m$, where $C \neq A,B$. By Equation \eqref{eq:digits}, we must show that when $p > m$, then
  \[
    \floor{ Cp/m} \leq \max\left(\floor{ Ap/m},\floor{Bp/m}\right)
  \]
  if and only if
  \[
C/m \leq \max(A/m ,B/m).
\]
That the latter implies the former is obvious (multiply by $p$ and take the floor). Conversely, suppose without loss of generality that $A/m \geq B/m$ and $\floor{ Cp/m} \leq \floor{ Ap/m}$. If $\floor{ Cp/m} < \floor{ Ap/m}$ then obviously $C/m < A/m$ and we're done. Hence assume that $\floor{Cp/m} = \floor{Ap/m}= N$, so that
\[
  \frac{Nm}{p} \leq A,C < \frac{Nm}{p}+\frac{m}{p}.
\]
Since $p > m$ and $A$ and $C$ are integers, it follows from this that $A = C$, a contradiction.
\end{proof}
Theorem \ref{t:density} has several useful consequences. For example, observe that $B(a,b;c)$ is a union of cyclic subgroups of $(\ZZ/m\ZZ)^\times$. In particular, $B(a,b;c) = \emptyset$ if and only if $1 \not \in B(a,b;c)$. From this one deduces that $D(a,b;c) = 0$ if and only if $c <a$ and $c < b$. This characterization of when the density of bounded primes satisfies $D(a,b;c) = 0$ was first established in Theorem 4.14 of \cite{FrancGannonMason} without the use of Theorem \ref{t:density} above.

\section{A conjecture on the densities of bounded primes}
\label{s:conjecture}
In this section we consider the general behaviour of the density $D(a,b;c)$ of bounded primes for a hypergeometric series. The expectation is that it should be rare for this density to be large. For example, in \cite{FrancGannonMason} it was shown that $D(a,b;c) = 1$ if and only if the corresponding monodromy representation is finite\footnote{That finite monodromy implies $D(a,b;c) = 1$ is a result that goes back to Eisenstein}. In order to study this question we introduce a notion of complexity for the parameters of a hypergeomteric series. Recall that if $a \in \QQ$, then the \emph{height} $h(a)$ of $a$ is the maximum size of the numerator or denominator of $\abs{a}$ when $\abs{a}$ is written in lowest terms.

\begin{dfn}
The \emph{parameter height} of a hypergeometric series $_2F_1(a,b;c)$ is the quantity
\[
  h(a,b;c) \df \max \{h(a),h(b),h(c)\}.
\]
\end{dfn}

The parameter height of $_2F_1(a,b;c)$ has nothing to do with the usual height of the rational coefficients of $_2F_1(a,b;c)$.  Since we're normalizing our parameters to lie in the interval $(0,1)$, the parameter height is determined by the denominators of the parameters $a$, $b$ and $c$. 

Note that if $a$, $b$ and $c$ are any rational numbers such that $a$, $b$, $c$, $a-c$ and $b-c$ are not integers, then
\[D(a,b;c) = D(\{a\},\{b\};\{c\}),\]
although a finite number of bounded primes for one of the sets of parameters above could be unbounded for the other (see \cite{FrancGannonMason}). We thus let $P$ denote the parameter set of hypergeometric triples $(a,b;c)$ where $a,b,c \in (0,1) \cap \QQ$ and $c\neq a,b$.

For $r \in [0,1]$ define
\[
  \beta(r,N) = \frac{\abs{\{(a,b;c) \in P \mid h(a,b;c) \leq N \textrm{ and } D(a,b;c) \leq r\}}}{\abs{\{(a,b;c) \in X \mid h(a,b;c) \leq N\}}}.
\]
Then $\beta(r,N)$ measures what proportion of hypergeometric series have a density of bounded primes that is at most $r$. For example, \cite{FrancGannonMason} showed that, under our hypotheses on $a$, $b$ and $c$, one has $D(a,b;c) =0$ exactly when $c$ is the smallest parameter. Since $h(a,b;c)$ is invariant under permuting $a$, $b$ and $c$, it follows that
\[
  \beta(0,N) = \frac{1}{3}.
\]
Computations suggest that if $\veps > 0$, then for large enough $N$, the proportion $\beta(\veps,N)$ of hypergeometric series with a density of bounded primes that is at most $\veps$ should be quite large. In fact, the following conjecture is supported by computational evidence:
\begin{conj}
  \label{mainconj}
For all $\veps > 0$,
\[
  \lim_{N \to \infty} \beta(\veps,N) = 1.
\]
\end{conj}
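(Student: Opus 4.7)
The plan is to stratify the set of triples $(a,b;c) \in P$ of parameter height at most $N$ according to the least common denominator $m$ of $a-1$, $b-1$ and $c-1$, and to prove that for each $m$ in the asymptotically dominant range, a typical such triple has $D(a,b;c) = \abs{B(a,b;c)}/\phi(m)$ well below $\veps$. A preliminary counting step is to verify that the overwhelming majority of triples with $h(a,b;c) \leq N$ have $m$ of order at least $N^{1-o(1)}$, so that one may restrict attention to triples of large denominator. This reduces the conjecture to showing that for any $\veps > 0$, the proportion of triples $(a,b;c)$ with least common denominator exactly $m$ satisfying $D(a,b;c) > \veps$ tends to $0$ as $m \to \infty$.

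Given Theorem \ref{t:density}, the natural approach is the first moment method: write $\abs{B(a,b;c)} = \sum_u \mathbf{1}_{u \in B(a,b;c)}$ with $u$ ranging over $(\ZZ/m\ZZ)^\times$, compute the expected value over random triples $(a,b;c)$ of denominator dividing $m$, and apply Markov's inequality. For a fixed $u$ of multiplicative order $M$, the condition $u \in B(a,b;c)$ consists of $M$ inequalities $\{-u^jc\} \leq \max(\{-u^ja\}, \{-u^jb\})$ indexed by $j = 0, \ldots, M-1$. Since multiplication by $u^j$ permutes $(\ZZ/m\ZZ)^3$ bijectively, each individual inequality holds on a proportion close to $2/3$ of triples, matching the continuous probability that a uniform variable on $(0,1)$ fails to be the maximum of three i.i.d.\ uniforms. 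If these $M$ events were independent, the expected contribution of $u$ to $\abs{B(a,b;c)}$ would be $(2/3)^M$; summing over $u$ would then give $E[\abs{B(a,b;c)}] = O(1)$, hence $E[D(a,b;c)] = O(\phi(m)^{-1}) \to 0$, and Markov's inequality would close the argument.

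The main obstacle is that these $M$ events are \emph{not} independent, since all are determined by the single triple $(a,b,c)$. My proposed approach is to control the dependence by Fourier analysis on $(\ZZ/m\ZZ)^3$: expand the indicator of each inequality into additive characters, multiply out the product of $M$ such expansions, and estimate the resulting exponential sums by Weil-type bounds (for prime $m$) or their extensions to composite moduli. The principal character recovers the independent heuristic $(2/3)^M$, and one hopes that non-principal contributions enjoy genuine power savings. The most delicate range is units $u$ of small order $M$, for which $(2/3)^M$ is not small: here $B(a,b;c)$ could contain a cyclic subgroup of size comparable to $\phi(m)$, and one must argue separately that such $u$ are few and that their aggregate contribution to the expected density is negligible, using divisor bounds on $\phi(m)$ and the structure of the subgroup lattice of $(\ZZ/m\ZZ)^\times$. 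A natural intermediate goal, closely paralleling Theorem \ref{t:specialcase}, is to establish the conjecture restricted to prime denominators $m = p$; even that restricted case appears to require substantial input beyond first-moment estimates, which is likely why the present paper only achieves Conjecture \ref{mainconj} under the stringent assumption $p = 2q^r+1$.
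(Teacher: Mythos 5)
The statement you are trying to prove is Conjecture \ref{mainconj}, and the paper does not prove it: it is explicitly left open, supported only by the numerical data of Section \ref{s:conjecture} and by the partial result Theorem \ref{t:specialcase}. Your closing sentence misreads the paper on this point --- Theorem \ref{t:specialcase} does not ``achieve Conjecture \ref{mainconj} under the assumption $p=2q^r+1$''; it bounds the density of each individual series with those denominators by $1/q$, which is evidence for the conjecture but not a proof of any case of the limit statement about $\beta(\veps,N)$. So there is no paper proof to compare against, and your text should be judged as a proposed proof of an open problem.

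Judged that way, it is a plausible heuristic plan but not a proof, and the gaps are exactly at the points where all the difficulty lives. First, the entire argument hinges on upgrading the independence heuristic $\mathbf{P}(u\in B(a,b;c))\approx(2/3)^M$ to a rigorous bound with uniformity in $u$, in the order $M$, and in $m$; the Fourier-analytic step you propose is only named, not carried out. The events in question are fractional-part inequalities, i.e.\ membership of $(a,b,c)$ in unions of simplicial regions of $(\ZZ/m\ZZ)^3$ after the linear change of variables by $u^j$; expanding their indicators in additive characters produces incomplete linear exponential sums whose main terms do not obviously exhibit the multiplicative cancellation that Weil-type bounds address, and the product over $M$ inequalities multiplies the number of character terms exponentially in $M$, precisely the regime where you need the estimate. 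No power saving is exhibited, and it is not clear one exists by this route. Second, even granting the heuristic, the first-moment bookkeeping is incomplete: you must control $\sum_M N_m(M)(2/3)^M$ where $N_m(M)$ is the number of elements of order $M$ in $(\ZZ/m\ZZ)^\times$, uniformly over the composite $m$ that dominate the height-$\leq N$ count (where $N_m(M)$ can be as large as roughly $M^{\omega(m)}$), and you must justify the reduction to large $m$ and the equidistribution of triples of exact denominator $m$; none of this is done. As it stands the proposal is a research program, consistent with the paper's own framing of the problem as open, rather than a proof.
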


We have performed extensive computations of densities of bounded primes for all hypergeometric series with parameters normalized as above, and satisfying $h(a,b;c) \leq 64$. First we plot the frequency of each density count up to height $16$ in Figure \ref{figure1}. Observe that the frequency of density zero in Figure \ref{figure1} accounts for one-third of the data, and it dominates the figure. Thus, in Figure \ref{figure2} we include similar plots up to heights $16$, $32$, $48$ and $64$, but we omit the data for density zero.

\begin{figure}[h]

  \includegraphics[scale=0.6]{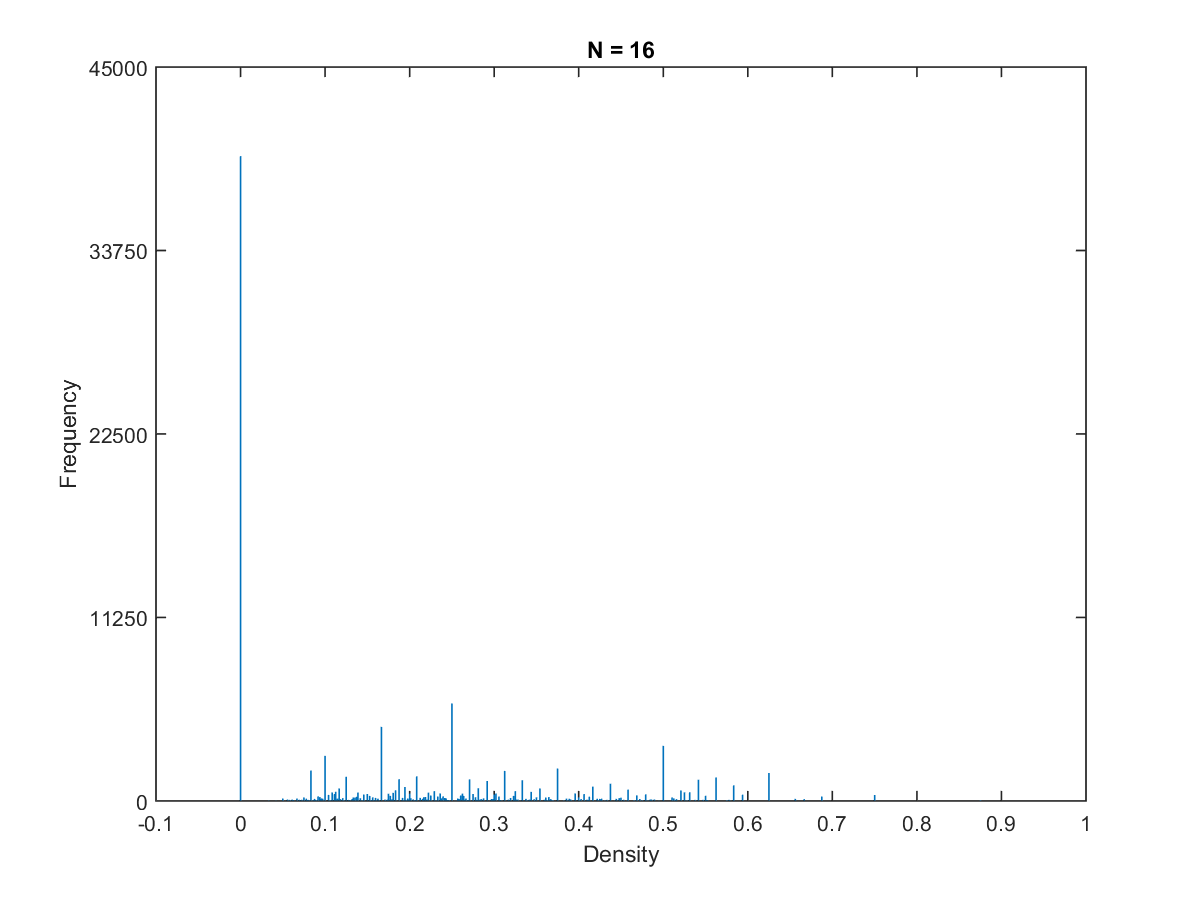}
  \caption{Densities of bounded primes for hypergeometric series up to parameter height $16$.}
  \label{figure1}
\end{figure}

\begin{figure}[h]

  \hspace{-4.5cm}\begin{minipage}{\textwidth}
  \begin{tabular}{cc}
\includegraphics[scale=0.5]{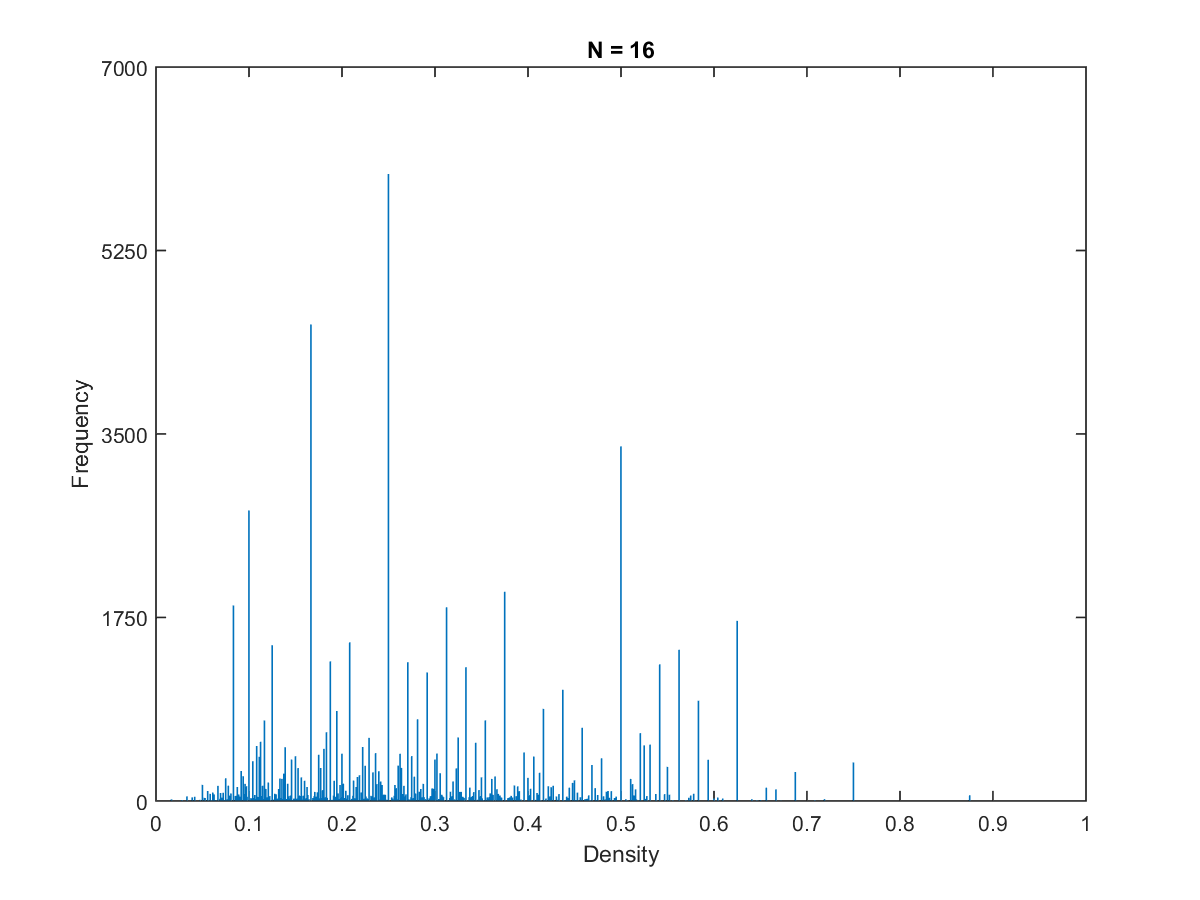}  &
\includegraphics[scale=0.5]{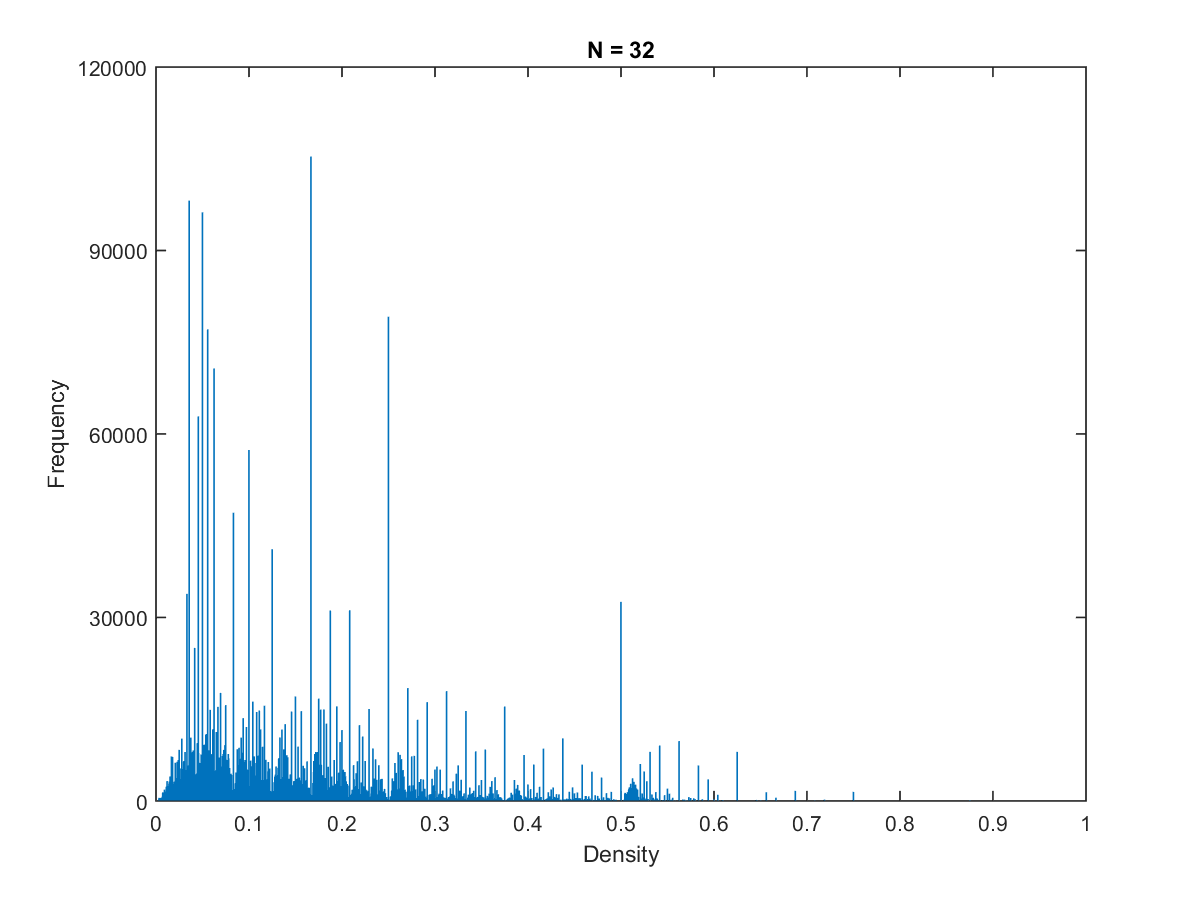}  \\
\includegraphics[scale=0.5]{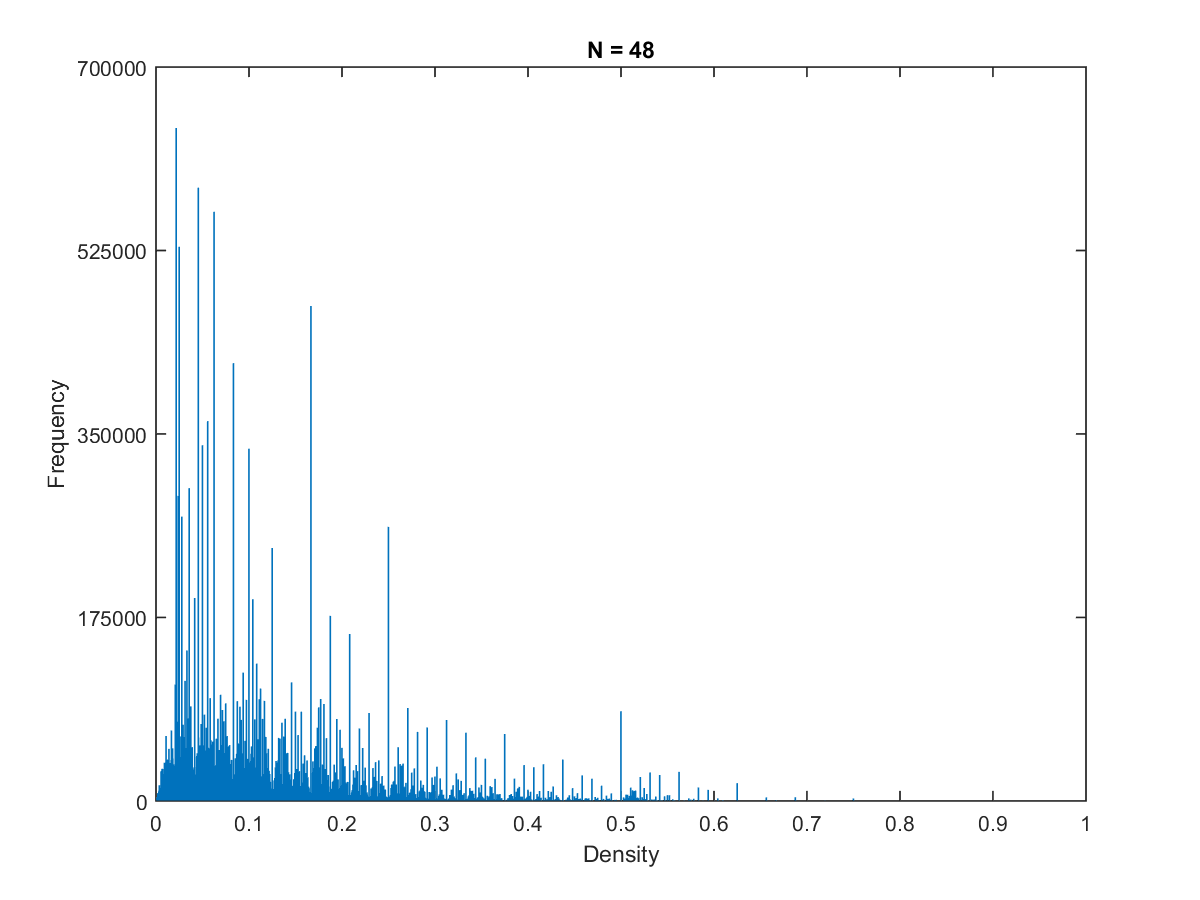}  &
\includegraphics[scale=0.5]{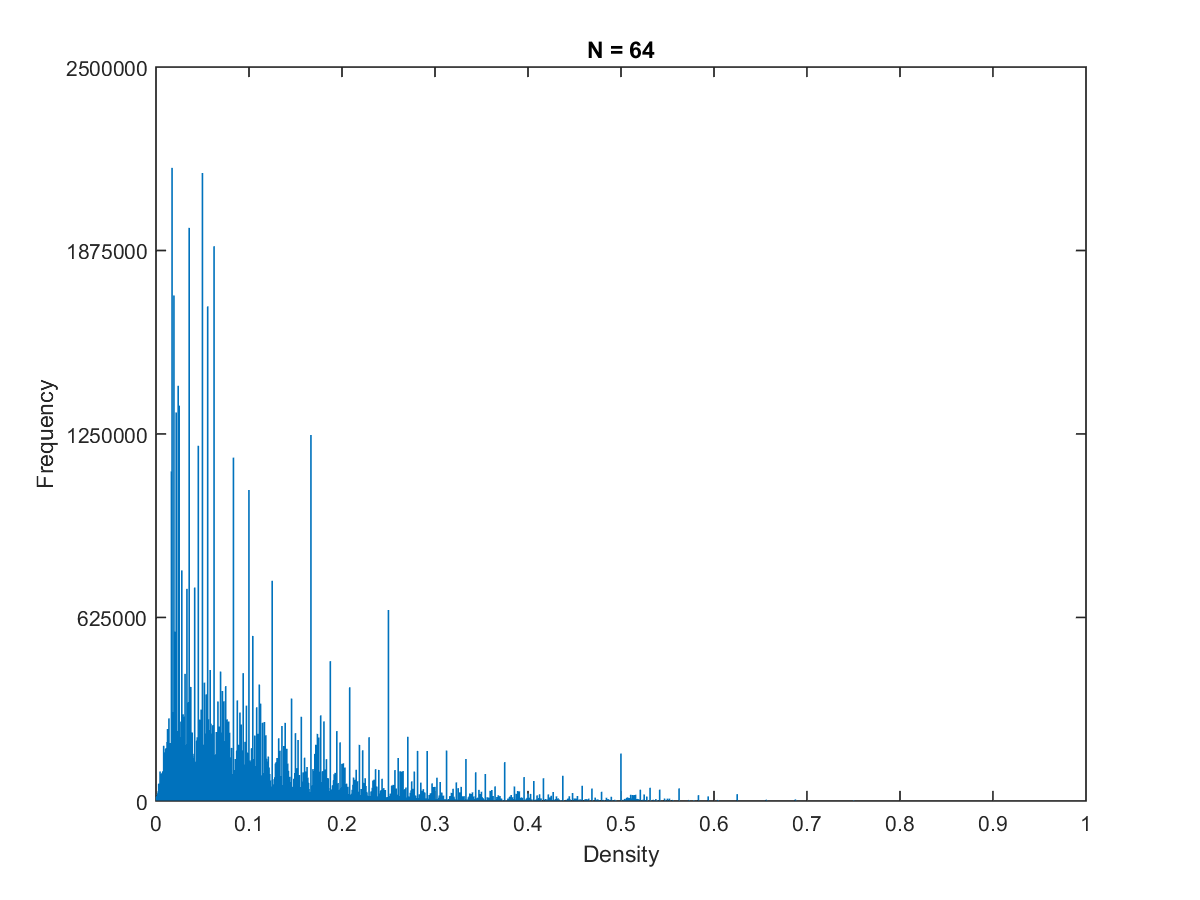} 
\end{tabular}
\end{minipage}
\caption{All densities of bounded primes up to parameter height $N = 16$, $32$, $48$ and $64$, with the zero density counts removed.}
\label{figure2}
\end{figure}

These plots contain spikes at certain densities, such as those of the form $1/2^n$, but the trend appears to be that the densities concentrate towards zero as the height grows. This is consistent with Conjecture \ref{mainconj}. In the next section we prove Theorem \ref{t:specialcase} which bounds certain densities of bounded primes from above, and which provides  more evidence that supports the truth of Conjecture \ref{mainconj}.

\begin{rmk}
The classical Schwarz list classifies those hypergeometric parameters for which $D(a,b;c) = 1$. Given $\delta > 0$, one might similarly try to classify all those parameters such that $D(a,b;c) \geq \delta$ in some kind of $\delta$-Schwarz list. If Conjecture \ref{mainconj} is true, then such a classification might not be unreasonable for certain values of $\delta$, such as $\delta = 1/2^n$.
\end{rmk}

\section{An upper bound for densities of bounded primes}
\label{s:evidence}
Let $m$ denote the least common multiple of the denominators of rational hypergeometric parameters $a$, $b$ and $c$, and assume that $U_m$ is cyclic. Let $u \in U_m$ be a generator. Then for every divisor $d \mid \phi(m)$, the element $u_d = u^{(p-1)/d}$ has order $d$. The possibilities for $B(a,b;c)$ are the sets generated by the various $u_d$:
\[
 \langle u_d \rangle = \{u_d,u_d^2,\ldots, u_d^{d}\}.
\]
If $x$ is a positive integer then let $d(x)$ denote the set of positive divisors of $x$. Let $I_x =\{J \subseteq d(x) \mid d \nmid e \textrm{ for all } d,e \in J \}$. Then the possibilities for $B = B(a,b;c)$ are the sets
\[
  B_J = \bigcup_{d \in J} \langle u_d\rangle
\]
for all $J \in I_{\phi(m)}$. Note that the unions are definitely not disjoint. By the inclusion-exclusion principle,
\[
  \abs{B_J} = \sum_{\substack{K \subseteq J\\ K \neq \emptyset}}(-1)^{\abs{K}-1}\gcd(K)
\]
where $\gcd(K)$ denotes the greatest common divisor of the elements of $K$. 

We will work with a choice of $m$ so that the subgroup structure of $U_m$ is very simple: let $m$ be a prime of the form $p= 2q^r+1$ where $q$ is also an odd prime, and $r \geq 1$. Note that then $p \equiv 3 \pmod{4}$ and $p > 3$. If $u$ is a generator for $U_p$ then the discussion above specializes to the following possibilities for the subsets $B(a,b;c)$ and the corresponding densities of bounded primes:
\begin{center}
\begin{tabular}{|l|l|}
\hline
$B(a,b;c)$ & Density \\
\hline
$\emptyset$  & 0\\
$\langle u^{q^j}\rangle$ &  $1/(2q^j)$\\
$\langle -u^{q^k}\rangle$ & $1/q^k$\\
$\langle u^{q^j}\rangle \cup \langle -u^{q^k}\rangle$ ($j < k$) & $(q^{k-j}+1)/(2q^{k})$\\
\hline
\end{tabular}
\end{center}
Note that most of these densities are quite small. In Theorem \ref{t:specialcase} below we will show that if $p$ is very large, then the large densities never occur. 

For the moment let $p > 3$ be any odd prime such that $p \equiv 3 \pmod{4}$, so that $-1$ is not a quadratic residue mod $p$. Let $Q$ denote the set of quadratic residues in $(\ZZ/p\ZZ)^\times$. Let $h_p$ denote the class number of $\QQ(\sqrt{-p})$. Since $p \equiv 3\pmod{4}$ and $p > 3$, Dirichlet's analytic class number formula implies that
\begin{equation}
\label{eq:dirichlet}
  -ph_p = \sum_{0 < y < p} \chi(y)y
\end{equation}
where $\chi(y) = \left(\frac{y}{p}\right)$ is the Legendre symbol.

If $x$ is an integer or an element of $\ZZ/p\ZZ$, then let $[x]_p$ denote the unique integer congruent to $x$ mod $p$ such that $0 \leq [x]_p < p$. Define
\begin{align*}
  U_p(x) &= \{y \in (\ZZ/p\ZZ)^\times \mid [xy]_p < [y]_p\},\\
  W_p(x) &=  U_p(x) \cap Q,
\end{align*}
and let $w_p(x) = \abs{W_p(x)}$.

Below we will see that the proof of Theorem \ref{t:specialcase}, which gives an upper bound on the densities of bounded primes considered here, follows from the fact that the intersection $W_p(a/c) \cap W_p(b/c)$ is nonempty whenever $p$ is large enough. In Proposition \ref{p:classnumber} below we show that the sets $W_p(x)$ are relatively large and often must intersect for trivial reasons. However, this simple proof does not work in all cases, and so more work is required. Proposition \ref{p:intersection} establishes what we need on the nonemptiness of these intersections, but it will require a sequence of preliminary results.
\begin{lem}
  \label{l:Uproperties}
  Let $p$ be an odd prime. If $x \not \equiv 0,1\pmod{p}$, the sets $U_p(x)$ satisfy the following:
  \begin{enumerate}
  \item $U_p(1)$ is empty;
  \item $U_p(x)$ contains one element of each pair $\{y,-y\}$ and thus $\abs{U_p(x)}=\frac{p-1}{2}$;
  \item $U_p(x) = U_p(1-x)$;
   \item $(x-1)U_p(x) = U_p(x/(x-1))$.
   \end{enumerate}
   Finally, if $1 \leq x \leq p-2$ then the set $U_p(-x)$ breaks up into disjoint intervals as follows:
  \[
U_p(-x) = \bigcup_{a=1}^x \left\{\frac{ap}{x+1} < y < \frac{ap}{x}\right\}.
\]
\end{lem}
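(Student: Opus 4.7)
My plan is to dispatch the five claims in order, reducing most of the work to elementary integer arithmetic. Throughout I will set $a = [y]_p$ and $b = [xy]_p$; the hypotheses $x \not\equiv 0, 1 \pmod p$ and $y \in (\ZZ/p\ZZ)^\times$ guarantee $a, b \in \{1, \ldots, p-1\}$ with $a \neq b$, so every inequality that arises will be strict.

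Claim (1) is immediate from $[1 \cdot y]_p = [y]_p$. For claim (2), I use the involution $y \leftrightarrow -y$ on $(\ZZ/p\ZZ)^\times$: since $[-z]_p = p - [z]_p$ for every unit $z$, the inequality defining $U_p(x)$ for $y$ (namely $b < a$) is complementary to the one for $-y$ (namely $p - b < p - a$), so exactly one element of each antipodal pair lies in $U_p(x)$. Claims (3) and (4) both hinge on the two-case formula for $[(1-x)y]_p$: it equals $a - b$ when $b < a$ and $a - b + p$ when $b > a$. For (3), a direct comparison with $a$ settles the claim. For (4), I change variables by $z = (x-1)y$, under which the defining inequality for $U_p(x/(x-1))$ becomes $[xy]_p < [(x-1)y]_p$; using $[(x-1)y]_p = p - [(1-x)y]_p$ and the same case analysis shows this is equivalent to $b < a$. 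Since $y \mapsto (x-1)y$ is a bijection of $(\ZZ/p\ZZ)^\times$ and both sides have cardinality $(p-1)/2$ by (2), the containment $(x-1)U_p(x) \subseteq U_p(x/(x-1))$ upgrades to equality.

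The main obstacle is claim (5). My strategy is to unfold the defining inequality directly. Writing $k = \lfloor xy/p \rfloor$, one has $[-xy]_p = (k+1)p - xy$, so $y \in U_p(-x)$ if and only if $(k+1)p - xy < y$, i.e.\ $(k+1)p < (x+1)y$. Combined with the definitional $xy < (k+1)p$, this is exactly the statement that the integer $a \df k+1$ lies strictly between $xy/p$ and $(x+1)y/p$, which rearranges to $ap/(x+1) < y < ap/x$. The range $1 \le a \le x$ drops out of $0 < y < p$. For the converse, any $y$ in one of the listed intervals satisfies $(x+1)y > ap$ and $y < p$, forcing $xy > (a-1)p$ and thus pinning down $k = a-1$. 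Disjointness of the intervals reduces to the elementary inequality $ap/x \le (a+1)p/(x+1)$, valid precisely when $a \le x$. Finally, the hypothesis $1 \le x \le p-2$ ensures $\gcd(x,p) = \gcd(x+1,p) = 1$, so no endpoint $ap/x$ or $ap/(x+1)$ is an integer and the open-interval description is unambiguous.
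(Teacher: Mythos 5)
Your proof is correct, and for parts (1), (2), (3) and the interval decomposition (5) it is in substance the same elementary residue arithmetic as the paper's: (3) via the observation that $[(1-x)y]_p$ equals $[y]_p-[xy]_p$ or $[y]_p-[xy]_p+p$ according to whether $y\in U_p(x)$, and (5) by unwinding $[-xy]_p<[y]_p$ into $0<ap-xy<y$, i.e.\ $\frac{ap}{x+1}<y<\frac{ap}{x}$ with $1\le a\le x$. The one place you genuinely diverge is (4): the paper introduces the complementary sets $V_p(x)=\{y\mid [y]_p<[xy]_p\}$ and chains the identities $xU_p(x)=V_p(x^{-1})$ and $V_p(x)=V_p(1-x)$ to get $xU_p(x)=(x/(x-1))U_p(x/(x-1))$, whereas you substitute $z=(x-1)y$ and check directly, via the same two-case formula as in (3), that $[xy]_p<[(x-1)y]_p$ holds exactly when $[xy]_p<[y]_p$; this gives the equality outright (your cardinality fallback using (2) is not even needed), at the cost of a slightly longer computation but without the auxiliary $V_p$ bookkeeping. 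You also verify disjointness of the intervals in (5), which the paper leaves implicit. One cosmetic slip: your closing remark that no endpoint $ap/x$ or $ap/(x+1)$ is an integer is not quite right (for $a=x$ the endpoint $ap/x=p$ is an integer); but since the only possible integer endpoint is $p$ itself and $y$ ranges over $0<y<p$, nothing in the argument depends on this.
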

\begin{proof}
  Part (1) is clear, and (2) follows since $[-y]_p = p-[y]_p$.

  For (3) observe that $[xy]_p < [y]_p$ implies that we can write $xy = ap+r$ for $0 \leq r < y < p$. But then $(1-x)y = -ap+(y-r)$ shows that $[(1-x)y]=y-r < [y]$. Hence $U(x) \subseteq U(1-x)$, and the reverse inclusion follows by replacing $x$ with $1-x$.

For part (4), define
  \[
  V_p(x) = \{y \in (\ZZ/p\ZZ)^\times \mid [y]_p < [xy]_p\}.
\]
Observe that as above, $V_p(x) = V_p(1-x)$ for $x \not \equiv 0,1\pmod{p}$. Since $xU_p(x) = V_p(x^{-1})$ we find that
\[
  xU_p(x) = V_p(x^{-1}) = V_p(1-x^{-1}) = V_p((x-1)/x) = (x/(x-1))U_p(x/(x-1)).
\]
This establishes (4).

  Finally, for the last statement, observe that if $\frac{ap}{x+1} < y < \frac{ap}{x}$, this is equivalent with $0 < ap-xy < y$. It follows that $[-xy]_P < y =[y]_p$, and hence $y \in U_p(-x)$.

  Conversely, if $[-xy]_p < [y]_p$ we may write $ap-xy = [-xy]_p$. We are free to assume $0 \leq y < p$, so that $[y]_p = y$ and $a > 0$. Similarly $ap-xy < y$ so that $ap < (x+1)y < (x+1)p$. Hence $1 \leq a \leq x+1$ and we find that $\frac{ap}{x+1} < y < \frac{ap}{x}$. This implies that $U_p(-x)$ is as described.
\end{proof}

The following Proposition is not strictly necessary for the proof of Theorem \ref{t:specialcase}, but we include it out of interest. It demonstrates that the while the sets $W_p(u)$ and $W_p(v)$ sometimes must intersect for trivial reasons (say if $u$, $1-u$, $v$ and $1-v$ are all quadratic residues), such a simple-minded argument does not work in all cases.
\begin{prop}
  \label{p:classnumber}
Let $p$ be a prime satisfying $p > 3$ and  $p \equiv 3\pmod{4}$, and write $p = 2n+1$. Let $\chi(x) = \left(\frac xp\right)$ denote the Legendre symbol. If $x \not \equiv 0,1 \pmod{p}$ then
\[
	w_p(x) = \frac{1}{2}\left(n + (\chi(x)+\chi(1-x)-1)h_p\right) = \begin{cases}
	\frac{n+h_p}{2} & \chi(x) = \chi(1-x) = 1,\\
	\frac{n-h_p}{2} & \chi(x) \neq \chi(1-x),\\
	\frac{n-3h_p}{2} & \chi(x)=\chi(1-x)=-1.
	\end{cases}
\]
\end{prop}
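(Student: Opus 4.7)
The plan is to reduce the counting problem for $w_p(x)$ to a character sum evaluation that is forced by Dirichlet's formula. Since $p\equiv 3\pmod 4$ gives $\chi(-1) = -1$, we have $\mathbf{1}_Q(y) = (1+\chi(y))/2$ for $y \not\equiv 0\pmod p$; combined with $\abs{U_p(x)} = n$ from part (2) of Lemma \ref{l:Uproperties} this gives
\[
  w_p(x) = \tfrac{1}{2}\bigl(n + S(x)\bigr),\qquad\text{where}\qquad S(x) = \sum_{y \in U_p(x)}\chi(y),
\]
so the problem reduces to proving $S(x) = (\chi(x) + \chi(1-x) - 1) h_p$.

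To evaluate $S(x)$, I would introduce the auxiliary sum
\[
  T(x) = \sum_{y=1}^{p-1}\chi(y)\bigl([xy]_p - y\bigr)
\]
and compute it in two ways. The first evaluation uses linearity together with the substitution $z = [xy]_p$: since $xy \equiv z \pmod p$ and $\chi(x)^2 = 1$ we get $\chi(y) = \chi(x)\chi(z)$, so $\sum_y \chi(y)[xy]_p = \chi(x)\sum_z \chi(z)z = -ph_p\chi(x)$ by Equation \eqref{eq:dirichlet}, and subtracting $\sum_y \chi(y) y = -ph_p$ yields $T(x) = ph_p(1-\chi(x))$. The second evaluation rests on the key identity
\[
  [xy]_p - y = [(x-1)y]_p - p\cdot \mathbf{1}_{y \in U_p(x)}
\]
valid for $1 \le y \le p-1$: both sides are congruent to $(x-1)y \pmod p$, and since $[xy]_p - y \in (-p,p)$, a case-check on whether $[xy]_p < y$ or $[xy]_p > y$ pins down the multiple of $p$. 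Multiplying by $\chi(y)$, summing, and applying the same change of variable to the $[(x-1)y]_p$ piece yields $T(x) = -ph_p\chi(x-1) - p S(x)$.

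Equating the two expressions and solving gives $S(x) = h_p(\chi(x) - 1 - \chi(x-1))$; since $p \equiv 3 \pmod 4$ forces $\chi(x-1) = -\chi(1-x)$, this is exactly $S(x) = h_p(\chi(x) + \chi(1-x) - 1)$. Substituting into $w_p(x) = \tfrac{1}{2}(n + S(x))$ produces the first displayed equality of the Proposition, and the three tabulated cases follow from the three possible values of $\chi(x) + \chi(1-x) - 1 \in \{1, -1, -3\}$. The most delicate step is the sign identity for $[xy]_p - y$; everything else is a linear manipulation driven by Equation \eqref{eq:dirichlet}.
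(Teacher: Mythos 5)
Your proposal is correct, and it reaches the key identity $\sum_{y\in U_p(x)}\chi(y)=(\chi(x)+\chi(1-x)-1)h_p$ by a genuinely different route than the paper. The paper works entirely with sums over the set $U_p(x)$: it uses the pairing $\{y,-y\}$ (part (2) of Lemma \ref{l:Uproperties}) to convert the full sum $\sum_{0<y<p}\chi(y)y$ into a sum over $U_p(x)$, then invokes the transformation identities $U_p(x)=U_p(1-x)$ and $(x-1)U_p(x)=U_p(x/(x-1))$ (parts (3) and (4)) to produce three such expressions, applies Equation \eqref{eq:dirichlet} to each, adds them, and finishes with the pointwise identity $w+[(x-1)w]_p+[-xw]_p=2p$ for $w\in U_p(x)$. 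You instead evaluate the single full sum $T(x)=\sum_{y=1}^{p-1}\chi(y)([xy]_p-y)$ in two ways: multiplicative changes of variable plus \eqref{eq:dirichlet} give $T(x)=ph_p(1-\chi(x))$, while your sign identity $[xy]_p-y=[(x-1)y]_p-p\cdot\mathbf{1}_{y\in U_p(x)}$ gives $T(x)=-ph_p\chi(x-1)-pS(x)$; that identity is valid, since both sides are congruent to $(x-1)y$ mod $p$ and, after splitting on $[xy]_p<y$ versus $[xy]_p>y$ (equality being excluded by $x\not\equiv 1$, and $[(x-1)y]_p\neq 0$ likewise), both lie in a common interval of length $p$. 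Your route bypasses parts (3) and (4) of Lemma \ref{l:Uproperties} altogether --- you only need part (2) for $\abs{U_p(x)}=n$ --- and is shorter and closer to standard evaluations of sums of the type $\sum_y\chi(y)[xy]_p$; the paper's version, in exchange, displays the symmetric roles of $x$, $1-x$ and $x/(x-1)$ and leans on the lemma it needs anyway later in Section \ref{s:evidence}. One cosmetic point: the indicator formula $\mathbf{1}_Q(y)=(1+\chi(y))/2$ holds for every odd prime and does not require $\chi(-1)=-1$; the hypothesis $p\equiv 3\pmod 4$ genuinely enters only through \eqref{eq:dirichlet} and the final conversion $\chi(x-1)=-\chi(1-x)$, both of which you use correctly.
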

\begin{proof}
Begin by writing
\[
  w(x) = \sum_{y \in W_p(x)} 1 = \frac 12\sum_{y \in U_p(x)}(1+\chi(y)) = \frac{n}{2} + \frac 12 \sum_{y \in U_p(x)}\chi(y).
\]
Therefore we define $u(x) = \sum_{y \in U_p(x)}\chi(y)$. We must show that
\[
  u(x) = (\chi(x)+\chi(1-x)-1)h_p.
\]
Since $U_p(x)$ contains one of each pair $\{y,-y\}$ of elements,
\[
  \sum_{0 < y < p}\chi(y)y = \sum_{y \in U_p(x)}(\chi(y)y + \chi(-y)(p-y)) = 2\sum_{y\in U_p(x)}\chi(y)y - pu(x). 
\]
By part (4) of Lemma \ref{l:Uproperties}, we know that $(x-1)U_p(x) = U_p(x/(x-1))$. Therefore,
\begin{align*}
  \sum_{0 < y < p}\chi(y)y &=\sum_{y \in U_p(x/(x-1))}(\chi(y)y+\chi(-y)(p-y))\\
                           &=\sum_{y \in (x-1)U_p(x)}(\chi(y)y+\chi(-y)(p-y))\\
                           &=\sum_{w \in U_p(x)}(\chi((x-1)w)[(x-1)w]_p+\chi(-(x-1)w)[-(x-1)w])_p\\
  &= \chi(x-1)\sum_{w \in U_p(x)}\chi(w)([(x-1)w]_p-[-(x-1)w]_p)
\end{align*}
Note that $[-(x-1)w]_p = p-[(x-1)w]_p$. Hence,
\[
 \sum_{0 < y < p}\chi(y)y = 2\chi(x-1)\sum_{y \in U_p(x)}\chi(w)[(x-1)w]_p - p\chi(x-1)u(x).
\]
Using that $U_p(x) = U_p(1-x)$ by Lemma \ref{l:Uproperties}, we can replace $x$ by $1-x$ above to deduce that
\[
 \sum_{0 < y < p}\chi(y)y = 2\chi(-x)\sum_{y \in U_p(x)}\chi(w)[-xw]_p - p\chi(-x)u(x).
\]
If we now apply Dirichlet's analytic class number formula \eqref{eq:dirichlet} to these three identities, we've shown that
\begin{align*}
  -ph_p &= -pu(x)+2\sum_{w\in U_p(x)}\chi(w)w\\
  \chi(1-x)ph_p &= -pu(x)+2\sum_{w\in U_p(x)}\chi(w)[(x-1)w]_p\\
  \chi(x)ph_p &=-pu(x)+ 2\sum_{w \in U_p(x)}\chi(w)[-xw]
\end{align*}
Therefore,
\[
  (\chi(x)+\chi(1-x)-1)ph_p = -3pu(x) + 2\sum_{w \in U_p(x)}\chi(w)(w+[(x-1)w]_p+[-xw]_p)
\]
Hence what we're trying to prove is equivalent to showing that
\[
 u(x) = \frac{1}{2p}\sum_{w \in U_(x)}\chi(w)(w+[(x-1)w]_p+[-xw]_p)
\]
Observe that
\begin{align*}
  w+[(x-1)w]_p+[-xw]_p &= 2p+w-[(1-x)w]_p-[xw]_p.
\end{align*}
Since $w \in U_p(x)$, we have $[xw]_p < [w]_p=w$. Write $xw = ap+r$ where $0 \leq r < w$. Then $(1-x)w = w-ap-r = -ap+(w-r)$ shows that $[(1-x)w]_p = w-r$. Therefore
\[
  \frac{1}{2p}\sum_{w \in U(x)}\chi(w)(w+[(x-1)w]_p+[-xw]_p) = \frac{1}{2p}\sum_{w \in U(x)}\chi(w)(2p) = u(x), 
\]
as desired.
\end{proof}

\begin{rmk}
  Part (5) of Lemma \ref{l:Uproperties} shows that Proposition \ref{p:classnumber} is equivalent to the following formula: if $1 \leq x \leq p-2$ and $p$ is a prime satisfying $p \equiv 3 \pmod{4}$, then 
  \[
  \sum_{a=1}^x\sum_{\frac{ap}{x+1} < y < \frac{ap}{x}} \left(\frac{y}{p}\right) = (\chi(x+1)-\chi(x)-1)h_p.
\]
There is a long history concerning formulas for sums of Legendre symbols over restricted intervals in terms of class numbers (see for example the paper \cite{Berndt} or the more recent text \cite{Wright}), although we were not able to find this particular result in the literature.
\end{rmk}

Let $p$ denote an odd prime, and let $n_p > 1$ denote the smallest integer that is not a quadratic residue mod $p$. The Riemann hypothesis for Dirichlet $L$-series implies that
\[
  n_p < \frac 32 (\log p)^2.
\]
The following weaker but unconditional result is the best known bound to date.
\begin{thm}
  \label{t:smallnp}
  For all odd primes $p$ one has
  \[n_p = O_\veps(p^{\frac{1}{4\sqrt{e}}+\veps}).\]
\end{thm}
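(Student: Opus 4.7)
The plan is to use the classical Vinogradov--Burgess strategy, combining a smooth-number count with Burgess's character sum estimate. Let $\chi$ denote the Legendre symbol mod $p$ and suppose for contradiction that every integer in $[1,N]$ is a quadratic residue mod $p$, so that $n_p > N$. Then $\chi(\ell) = 1$ for every prime $\ell \leq N$, and consequently every $N$-smooth integer (one whose prime factors all lie in $[1,N]$) is a quadratic residue mod $p$.

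First I would invoke the Dickman--de~Bruijn asymptotic for smooth numbers: writing $M = N^\alpha$ and $\Psi(M,N)$ for the count of $N$-smooth integers in $[1,M]$, one has
\[
\Psi(M,N) = \rho(\alpha) M + o(M) \qquad (N \to \infty),
\]
where $\rho$ is the Dickman function. On $[1,2]$ one has $\rho(\alpha) = 1 - \log \alpha$, so $\rho(\alpha) > 1/2$ exactly when $\alpha < \sqrt{e}$. Fixing such an $\alpha$, and using that each smooth integer $\leq M$ is a QR,
\[
S(M) \df \sum_{n \leq M} \chi(n) \;\geq\; 2\Psi(M,N) - M \;\geq\; (2\rho(\alpha)-1-o(1)) M \;\gg\; M.
\]
Next I would apply Burgess's character sum bound: for every non-principal $\chi$ mod $p$ and every integer $r \geq 1$,
\[
|S(M)| \;\ll_{r,\veps}\; M^{1-1/r}\, p^{(r+1)/(4r^2) + \veps}.
\]
Comparing the two inequalities forces $M \ll p^{(r+1)/(4r) + r\veps}$. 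Letting $r \to \infty$ (and absorbing constants into $\veps$) gives $M \ll p^{1/4 + \veps}$, hence $N = M^{1/\alpha} \ll p^{1/(4\alpha) + \veps}$. Letting $\alpha$ tend to $\sqrt{e}$ from below yields $N \ll p^{1/(4\sqrt{e})+\veps}$. Choosing $N = n_p - 1$ produces the claimed bound.

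The main obstacle is Burgess's character sum estimate itself, whose proof is genuinely deep: one raises $|S(M)|$ to the $2r$-th power, averages over short shifts, and applies Hölder to reduce matters to bounding character sums of polynomials; those polynomial sums are then controlled by the Riemann hypothesis for curves over $\FF_p$ (Weil's theorem). This is where the exponent $1/(4\sqrt{e})$ is squeezed out. The Dickman count and the optimizations in $r$ and $\alpha$ are elementary by comparison, and in a paper like the present one one would simply quote Burgess's theorem from the literature rather than reprove it.
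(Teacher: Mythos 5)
Your outline is correct and matches the paper's treatment: the paper proves Theorem \ref{t:smallnp} simply by citing \cite{Burgess}, and your argument --- Vinogradov's smooth-number trick via the Dickman function combined with Burgess's character-sum inequality quoted as a black box --- is precisely the derivation in that reference. The only point needing care is the order of quantifiers (fix $\veps$, then take $r$ large and $\alpha$ close to $\sqrt{e}$ depending on $\veps$), which your sketch handles correctly in spirit.
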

\begin{proof}
See \cite{Burgess}.
\end{proof}

\begin{lem}
  \label{l:Uproperties2}
  Let $p$ be an odd prime. The following statements hold.

  \begin{enumerate}
\item If $y \in U_p(x)$ and $0 < y < p$, then $yj \in U_p(x)$ for all $j$ in the range $1 \leq j \leq \floor{p/y}$.
  \item Let $0 < r < p$ and $n = p-r$, and suppose $r \not \in U_p(x)$. If $N = \floor{p/n}$, and if
  \[
  Y = \{nj \mid 1 \leq j \leq N\},
\]
then $Y \subseteq U_p(x)$.
\end{enumerate}
\end{lem}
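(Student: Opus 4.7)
The plan is to unravel the definition of $U_p(x)$ via Euclidean division; both parts then reduce to careful bookkeeping with small integer inequalities. Throughout, I tacitly assume $x \not\equiv 0,1 \pmod{p}$, since otherwise $U_p(x)$ is either all of $(\ZZ/p\ZZ)^\times$ or empty, and the statements are trivial or vacuous.

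For part (1), I would take $y \in U_p(x)$ with $0 < y < p$ and write $xy = ap + r'$ with $0 \leq r' < y$, which is exactly the statement $[xy]_p < [y]_p = y$. For $1 \leq j \leq \floor{p/y}$ one has $jy \leq p$, and since $p$ is prime with $y \neq 1, p$ in fact $jy < p$, so $[jy]_p = jy$. Multiplying the Euclidean identity by $j$ gives $x(jy) = (aj)p + jr'$ with $jr' < jy < p$, so $[x(jy)]_p = jr' < jy = [jy]_p$, whence $jy \in U_p(x)$.

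For part (2), I would write $xr = bp + s$ with $0 \leq s < p$, so $s = [xr]_p$. The hypothesis $r \notin U_p(x)$ says $s \geq r$, and I would first upgrade this to the strict inequality $s > r$: equality would give $(x-1)r \equiv 0 \pmod{p}$, forcing $x \equiv 1$. This strict inequality is the algebraic heart of the argument, as it is equivalent to $s + n > p$. Now fix $1 \leq j \leq N$: one verifies $nj < p$ strictly (the case $n = 1$ is already ruled out), so $[nj]_p = nj$, and since $x(nj) \equiv -sj \pmod{p}$ with $[sj]_p > 0$, we have $[x(nj)]_p = p - [sj]_p$. It therefore suffices to show $[sj]_p + nj > p$. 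Writing $[sj]_p = sj - \alpha p$ with $\alpha = \floor{sj/p}$, the bound $s < p$ forces $\alpha \leq j - 1$, and hence
\[
(s+n)j \;>\; pj \;\geq\; (\alpha+1)p,
\]
which rearranges to exactly $[sj]_p + nj > p$, as needed.

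The main obstacle is not any real technical difficulty but rather careful bookkeeping: one must track the upgrade from the weak $s \geq r$ to the strict $s > r$, verify the ``off-by-one'' containments $jy < p$ and $nj < p$ that let us drop the $[\,\cdot\,]_p$ brackets, and spot the compact estimate $\alpha \leq j-1$ that makes the final one-line inequality work out. The translation of the negative hypothesis $r\notin U_p(x)$ into the clean numerical statement $s+n>p$ is what turns part (2) from an opaque claim into a short calculation.
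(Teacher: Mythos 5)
Your proof is correct, and part (1) is essentially the paper's own argument: multiply the Euclidean identity $xy = ap + r'$, $0 \le r' < y$, by $j$ and note $jy < p$. For part (2), however, you take a genuinely more computational route than the paper. The paper's proof of (2) is a two-line reduction: since $r \notin U_p(x)$ and, by part (2) of Lemma \ref{l:Uproperties}, $U_p(x)$ contains exactly one element of each pair $\{y,-y\}$, it follows that $n = p - r \equiv -r$ lies in $U_p(x)$, and then (2) is just part (1) applied with $y = n$. You instead prove the containment $nj \in U_p(x)$ directly from scratch: translating $r \notin U_p(x)$ into $s = [xr]_p > r$ (strict because $x \not\equiv 1$), i.e.\ $s + n > p$, and then checking $[x(nj)]_p = p - [sj]_p < nj$ via the estimate $\floor{sj/p} \le j-1$. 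When specialized to $j = 1$ your computation is in effect a re-proof of the pairing fact the paper quotes, so your argument subsumes rather than bypasses it. What your version buys is self-containedness and explicit attention to the boundary points the paper glosses over (the strictness $s > r$, the fact that $n \ge 2$ so that $nN < p$, and likewise $y \ge 2$ in part (1), all of which hinge on the tacit hypothesis $x \not\equiv 0,1 \pmod p$ that you correctly flag); what it costs is length, since the structural shortcut through Lemma \ref{l:Uproperties}(2) makes the paper's proof of (2) immediate once (1) is known.
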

\begin{proof}
  For the first claim, since $y \in U_p(x)$, we can write $xy = ap + r$ for $0 \leq r < y < p$. Hence $x(yj) = ajp + rj$ where $0 \leq rj < yj < p$. Then
  \[[x(yj)]_p = rj < yj = [yj]_p\]
 shows that $yj \in U_p(x)$.

 For the second claim observe that if $r \not \in U_p(x)$ then $n \in U_p(x)$ by part (2) of Lemma \ref{l:Uproperties}. Thus (2) follows from (1).
\end{proof}

\begin{ex}
  \label{ex:threemodeight}
  If $p \equiv 3 \pmod{8}$ then $n_p = 2$ is the smallest positive nonquadratic residue mod $p$ and $r_p = p-2$ is the largest quadratic residue between $0$ and $p$. It's not hard to show that $r_p \not \in U_p(x)$ if and only if $x \equiv (p+1)/2 \pmod{p}$. In this case Lemma \ref{l:Uproperties2} gives a complete description of $U_p((p+1)/2)$ as
  \[
  U_p((p+1)/2) = \{2j \mid 1 \leq j \leq (p-1)/2\}.
\]
Hence $W_p((p+1)/2)$ is the set of quadratic residues $y$ whose least positive residue $[y]_p$ is even. By Proposition \ref{p:classnumber}, there are $\frac{1}{2}(\frac{p-1}{2} - 3h_p)$ such quadratic residues. This classical result is well-known.
\end{ex}

More generally, Theorem \ref{t:smallnp} and Lemma \ref{l:Uproperties2} together imply that if $p \equiv 3 \pmod{4}$ and $U_p(x)$ does not contain the largest quadratic residue $r_p$ in the range $0 < r_p < p$, then there is a large subset $Y$ of $U_p(x)$ that we understand well. We will use this subset $Y$ to prove that the various sets $W_p(x)$ must intersect if $p$ is large enough.

\begin{prop}
  \label{p:intersection}
  Let $p \equiv 3 \pmod{4}$ be prime. Then there exists an integer $N$ such that if $p > N$, then for all $u,v$ coprime to $p$ and satisfying $u \not \equiv 1\pmod{p}$, $v \not \equiv 1 \pmod{p}$, we have
  \[
  W_p(u) \cap W_p(v) \neq \emptyset.
  \]
\end{prop}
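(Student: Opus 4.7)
The plan is to split into cases based on whether the largest quadratic residue $r_p$ less than $p$ lies in $U_p(u)$ and in $U_p(v)$. Since $p \equiv 3 \pmod{4}$, the non-residue $-1$ shows that $r_p = p - n_p$, where $n_p$ is the smallest positive non-residue mod $p$. If $r_p \in U_p(u) \cap U_p(v)$ then $r_p$ itself lies in $W_p(u) \cap W_p(v)$. Otherwise, by the symmetry swapping $u$ and $v$, I may assume $r_p \notin U_p(u)$; Lemma \ref{l:Uproperties2}(2) then gives that $Y := \{j n_p : 1 \leq j \leq N\}$, with $N = \lfloor p/n_p \rfloor$, is contained in $U_p(u)$. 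If moreover $r_p \notin U_p(v)$, then $Y \subseteq U_p(u) \cap U_p(v)$ and the element $y = n_p^2 \in Y \cap Q$ works (valid once $n_p^2 \leq p$, which holds for $p$ above some threshold by Theorem \ref{t:smallnp}, since $n_p = O(p^{1/(4\sqrt{e})+\varepsilon})$).

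The substantive case is $r_p \in U_p(v)$. Setting $\ell := [v n_p]_p$, the hypothesis translates to $\ell > n_p$, and one seeks a non-QR $j \in [1, N]$ satisfying $[j\ell]_p < j n_p$, so that $y = j n_p$ lies in $Y \cap Q \cap U_p(v)$. The count of such $j$ is a Legendre-symbol-weighted sum of floor-function indicators. Using equidistribution of the sequence $\{j\ell/p\}$ (e.g.\ Erd\H{o}s--Tur\'an or Koksma-type discrepancy estimates) together with Polya--Vinogradov applied to the Legendre symbol sum on $[1, N]$, one expects a main term of order $p/(4 n_p)$ with error $O(\sqrt{p}\log p)$; under the Burgess bound of Theorem \ref{t:smallnp}, this main term dominates and the count is positive for all sufficiently large $p$. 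Note that in this case $u$ is forced to lie in the small set $\{k/n_p \pmod{p} : 1 \leq k \leq n_p - 1\}$, which gives additional rigidity that can be exploited if the direct counting argument needs refinement.

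The main obstacle is this final case. The naive bound $|W_p(u)| + |W_p(v)| - |Q|$ from Proposition \ref{p:classnumber} is only $-3h_p + o(p)$, because the class number can be as large as $h_p = O(p^{1/2+\varepsilon})$, so pigeonholing inside the quadratic residues is insufficient. One must instead exploit the arithmetic-progression structure of $Y \subseteq U_p(u)$, combined with classical character-sum estimates, to locate a common residue in $U_p(u) \cap U_p(v) \cap Q$. In particular, the Burgess bound from Theorem \ref{t:smallnp} plays a double role: ensuring that $|Y| \sim p/n_p$ is large enough for $Y$ to meet $U_p(v)$ nontrivially, and simultaneously ensuring that the error terms from the character-sum estimates are dominated by the main term.
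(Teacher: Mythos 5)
Your case decomposition and the first two cases coincide with the paper's proof: the trivial case, and the case where $r_p$ misses both $U_p(u)$ and $U_p(v)$, where $n_p^2 \in Y$ is exactly the element the paper exhibits (using Theorem \ref{t:smallnp} to ensure $n_p \leq \floor{p/n_p}$). Your reduction of the remaining case is also correct: with $\ell = [vn_p]_p$, a nonresidue $j \leq N$ with $[j\ell]_p < jn_p$ gives $jn_p \in W_p(u)\cap W_p(v)$. The gap is in how you propose to produce such a $j$. The claimed decomposition ``main term $\sim p/(4n_p)$, error $O(\sqrt{p}\log p)$'' via Erd\H{o}s--Tur\'an/Koksma plus Polya--Vinogradov is not valid uniformly in $\ell$: the discrepancy of the sequence $(j\ell/p)_{j\leq N}$ can be of the same order as $N$ when $\ell/p$ has a large partial quotient, and these structured $\ell$ actually occur. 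For instance, take $v \equiv (n_p+1)n_p^{-1} \pmod p$, so $\ell = n_p+1$; then $[j\ell]_p < jn_p$ holds exactly for $j$ in the single interval $\left(\frac{p}{n_p+1}, \frac{p}{n_p}\right)$, whose length is about $p/n_p^2$ --- far smaller than $p/(4n_p)$ when $n_p$ is near the Burgess bound, so no error term of size $O(\sqrt{p}\log p)$ can reconcile your main term with the truth. Positivity still holds in that example (the interval has length $\gg p^{1-\frac{1}{2\sqrt{e}}-2\veps} \gg \sqrt{p}\log p$, so it contains a nonresidue by Polya--Vinogradov), but that is a separate argument your write-up does not supply, and the general structured case is exactly where the work lies.

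The paper avoids this uniformity problem with a structural dichotomy instead of equidistribution. In your labelling: either $U_p(v)$ contains some nonresidue $j$ with $1 \leq j \leq N$, in which case Lemma \ref{l:Uproperties2}(1) immediately gives $jn_p \in U_p(v)$, and $jn_p$ is a quadratic residue lying in $Y \subseteq U_p(u)$, so it lies in $W_p(u)\cap W_p(v)$; or $U_p(v)$ contains no nonresidue in $[1,N]$, and then by part (2) of Lemma \ref{l:Uproperties} it contains \emph{every} quadratic residue in $[p-N, p-1]$, so it suffices to find a nonresidue $j$ in the explicit interval $\frac{(n_p-1)p}{n_p^2} < j < \frac{p}{n_p}$ (so that $jn_p \geq p-N$), which follows from a single Polya--Vinogradov estimate on that interval combined with Theorem \ref{t:smallnp}, using $\frac{1}{2}-\frac{1}{2\sqrt{e}}>0$. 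To complete your proof you would either need to adopt this dichotomy (or an equivalent continued-fraction/structured-$\ell$ analysis) in place of the generic equidistribution count; the rigidity remark you make about $u$ does not by itself repair the estimate, since the problematic structure sits in $v$ (equivalently $\ell$), not $u$.
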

\begin{proof}
  Let $n_p$ denote the smallest positive nonquadratic residue, and let $r_p = p-n_p$ denote the largest quadratic residue in the range $0 < r_p < p$. There is nothing to prove if $r_p\in W_p(u) \cap W_p(v)$. Suppose instead that $r_p \not \in W_p(u)$ and $r_p \not \in W_p(v)$. In this case Lemma \ref{l:Uproperties2} yields that
\[
  \{n_pj \mid 1 \leq j \leq \floor{p/n_p}\} \subseteq U_p(u) \cap U_p(v).
\]
Hence, we must show that the interval $1 \leq j \leq \floor{p/n_p}$ contains $n_p$. This follows by Theorem \ref{t:smallnp} for large enough primes $p$, although weaker bounds would work for this case.

  Finally suppose that $r_p \in W_p(u)$ but $r_p \not \in W_p(v)$. Since $r_p = p-n_p$ satisfies $r_p\not \in U_p(v)$, Lemma \ref{l:Uproperties2} tells us that with $N= \floor{p/n_p}$, we have the containment
\[
  \{n_pj \mid 1 \leq j \leq N\}\subseteq U_p(v).
\]
If $U_p(u)$ contains a nonquadratic residue $j$ in the range $1 \leq j \leq N$, then it also contains the quadratic residue $n_pj$ by Lemma \ref{l:Uproperties2}. Then $n_pj \in W_p(u) \cap W_p(v)$. So we may assume that $U_p(u)$ does not contain any nonquadratic residue in the range $1\leq  j \leq N$. Equivalently, by part (2) of Lemma \ref{l:Uproperties}, $U_p(u)$ contains every quadratic residue in the range $p-N \leq y \leq p-1$.

Suppose that $n_pj \geq p-N$ for some $1 \leq j \leq N$. This means that
\begin{equation}
\label{mainineq}
  \frac{n_p-1}{n^2_p}p < j < \frac{p}{n_p}.
\end{equation}
Observe that $W_p(u) \cap W_p(v)$ contains all the elements $n_pj$ for nonquadratic residues $j$ satisfying the inequality \eqref{mainineq}. If there are no nonquadratic residues $j$ in this range, then the Polya-Vinogradov inequality
\[
  \abs{\sum_{a=b}^{N}\left(\frac{a}{p}\right)} < \sqrt{p}\log(p)
\]
with $b = \lceil (n_p-1)p/n_p^2\rceil$ gives
\[
  (p/n^2_p)-1 <\sqrt{p}\log p.
\]
But then Theorem \ref{t:smallnp} implies that
\[
  p^{1-\frac{1}{2\sqrt{e}}-2\veps} = O_\veps(\sqrt{p}\log(p)).
\]
This is a contradiction for small enough $\veps$, since $\frac{1}{2} - \frac{1}{2\sqrt{e}} > 0$. Therefore, for large enough primes, there is a nonquadratic residue $j$ in the range \eqref{mainineq}, and then $n_pj \in W_p(u) \cap W_p(v)$. This concludes the proof.
\end{proof}

If $p\equiv 3 \pmod{8}$ then we can use the fact that $2$ is the smallest positive nonquadratic residue mod $p$ to get an effective version of Proposition \ref{p:intersection}.
\begin{prop}
\label{p:effectiveintersection}
Let $p \equiv 3 \pmod{8}$ be prime. If $p > 11$, then for all $u,v$ coprime to $p$ and satisfying $u \not \equiv 1\pmod{p}$, $v \not \equiv 1 \pmod{p}$, we have
  \[
  W_p(u) \cap W_p(v) \neq \emptyset.
  \]
\end{prop}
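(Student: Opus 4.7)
The plan is to parallel Proposition \ref{p:intersection} but with the Polya--Vinogradov input replaced by an explicit perfect square. Concretely, I would produce a positive integer $k$ with $p/8 < k^2 \leq p/4$, and set $y_0 := 4k^2$; this is automatically less than $p$, hence a quadratic residue modulo $p$ that is both even and lies in $[(p+5)/2, p-3]$. The arithmetic structure of $p \equiv 3 \pmod 8$ makes both $-1$ and $2$ non-residues, so $r_p = p - 2 \equiv -2$ is a residue and $n_p = 2$, and Example \ref{ex:threemodeight} identifies $(p+1)/2$ as the unique $u \neq 1$ for which $r_p \notin U_p(u)$, with $U_p((p+1)/2)$ being the set of all nonzero even residues.

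With $y_0$ in hand, I would split on how many of $u, v$ equal $(p+1)/2$. If neither does, then $r_p$ lies in both $W_p(u)$ and $W_p(v)$, which suffices. If both equal $(p+1)/2$, then $y_0$ itself belongs to $W_p((p+1)/2) = W_p(u) \cap W_p(v)$ by virtue of being an even quadratic residue. Otherwise, without loss of generality $v = (p+1)/2$ and $u \notin \{1, (p+1)/2\}$; here $W_p(v)$ is precisely the set of even quadratic residues, so the task reduces to exhibiting an even quadratic residue inside $U_p(u)$. Lemma \ref{l:Uproperties}(2) tells me that exactly one of $y_0$ or $p - y_0$ lies in $U_p(u)$. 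If $y_0$ does, we are finished. If instead $p - y_0 \in U_p(u)$, then $p - y_0 \in [3, (p-5)/2]$, so $\lfloor p/(p - y_0) \rfloor \geq 2$, and Lemma \ref{l:Uproperties2}(1) yields $2(p - y_0) \in U_p(u)$; this element is even and has Legendre symbol $\chi(2)\chi(-y_0) = (-1)(-1) = 1$, supplying the required even quadratic residue.

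The main obstacle is confirming existence of the integer $k$ with $p/8 < k^2 \leq p/4$. For $p \geq 47$ the interval $(\sqrt{p/8}, \sqrt{p/4}\,]$ has length $\sqrt{p}(\sqrt{2} - 1)/(2\sqrt{2})$, which exceeds $1$, so the interval certainly contains an integer. The only primes with $p \equiv 3 \pmod 8$ and $11 < p < 47$ are $p = 19$ and $p = 43$, handled by $k = 2$ and $k = 3$ respectively. The failure of the argument precisely at $p = 11$ --- where no admissible $k$ exists --- is consistent with $p = 11$ being genuinely excluded from the conclusion.
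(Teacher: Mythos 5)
Your argument is correct, and while it shares the paper's overall skeleton, it resolves the crucial step by a genuinely different and more elementary device. Like the paper, you use Example \ref{ex:threemodeight} to isolate $x\equiv (p+1)/2$ as the only class (besides $1$) with $r_p\notin U_p(x)$, split into the three cases according to whether $u$ or $v$ lies in that class, and reduce the nontrivial case to exhibiting a quadratic residue with even least residue inside $U_p(u)$. At that point the paper argues via elements of $U_p(u)$ in $(0,p/4)$ and, in the remaining situation, needs a nonresidue in $(3p/8,p/2)$, which it gets from Polya--Vinogradov for $p>568$ together with a computer check of the smaller primes. You instead take an explicit even square $y_0=4k^2$ with $p/2<y_0<p$ (possible exactly when there is an integer $k$ with $p/8<k^2\leq p/4$, which holds for all $p>11$ in your congruence class and fails at $p=11$, matching the counterexample), and then use the $\{y,-y\}$ dichotomy of Lemma \ref{l:Uproperties}(2): either $y_0\in U_p(u)$, or $p-y_0\in U_p(u)$ with $p-y_0<p/2$, in which case Lemma \ref{l:Uproperties2}(1) puts $2(p-y_0)\in U_p(u)$, and $\chi\bigl(2(p-y_0)\bigr)=\chi(-2)\chi(y_0)=1$ since $p\equiv 3\pmod 8$, so this element is an even quadratic residue. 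The payoff of your route is a fully effective, computation-free proof with the sharp threshold $p>11$; the cost is nil beyond the small explicit checks at $p=19,43$, and your bound $y_0\in[(p+5)/2,\,p-3]$ is more precise than needed (only $p/2<y_0<p$ and $y_0$ even are used).
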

\begin{proof}
  In this case $n_p = 2$ and $r_p = p-2$. Again, there is nothing to show if $r_p \in W_p(u) \cap W_p(v)$. Suppose that $r_p \not \in W_p(u)$ and $r_p \not \in W_p(v)$. As in the proof of Proposition \ref{p:intersection}, we must prove that the interval $1 \leq j \leq \floor{p/2}$ contains $n_p$. This is obvious if $p > 3$.
  
  Finally assume that $r_p \in W_p(u)$ but $r_p \not \in W_p(v)$. Set $p=2q+1$, so that by Example \ref{ex:threemodeight},
\[
  W_p(q+1) = \{2,4,\ldots, p-1\}\cap Q.
\]
We must show that every set $W_p(u)$ contains a quadratic residue $y$ such that $[y]_p$ is an even integer.

If $U_p(u)$ contains a nonquadratic residue in the range $0 < y < p/4$ then by part (1) of Lemma \ref{l:Uproperties2}, $U_p(u)$ also contains the quadratic residue $2y = [2y]_p$, and hence $2y \in W_p(u)$ is an even element. Similarly, if $U_p(u)$ contains a quadratic residue in this range, then it also contains $4y = [4y]_p$ and $W_p(u)$ contains the even element $4y$. Therefore we can assume that $U_p(u)$ does not contain any element in the range $0 < y < p/4$. This means that $U_p(u)$ contains every element in the range $3p/4 < y < p$, by part (2) of Lemma \ref{l:Uproperties}.

Conisder the elements $3p/8 < y < p/2$. If there is a nonquadratic residue in this range then $3p/4 < 2y < p$ and $2y$ is an even quadratic residue, and it will thus be contained in $W_p(u)$. So we are reduced to proving that if $p$ is large enough, then there is always a nonquadratic residue in the range $3p/8 < y <p/2$. If we write $p = 8k+3$ then $\lceil 3p/8\rceil = 3k+1$ and $\floor{p/2} = 4k$, and if no nonquadratic residue is in this range, then the Polya-Vinogradov inequality gives
\[
  k = \abs{\sum_{a=3k+1}^{4k}\left(\frac{a}{p}\right)} < \sqrt{p}\log p.
\]
That is, we obtain $\frac{p-3}{8} < \sqrt{p}\log p$, which is a contradiction if $p > 568$. The remaining cases for primes $p > 11$ can be checked easily on a computer.
\end{proof}

\begin{ex}
  The bound of $p > 11$ in Proposition \ref{p:effectiveintersection} cannot be improved. For example,
  \begin{align*}
    W_{11}(2)& = \{9\}, & W_{11}(6) &= \{4\}, 
  \end{align*}
and $W_{11}(2) \cap W_{11}(6) = \emptyset$.
\end{ex}

\begin{thm}
\label{t:specialcase}
There exists a number $N$ such that the following holds: for all primes $p > N$ of the form $p = 2q^r+1$, with $q$ an odd prime and $r \geq 1$, and for all rational parameters $x/p$, $y/p$ and $z/p$ where $1 \leq x,y,z \leq p-1$  and $x \neq z$, $y \neq z$, the density of bounded primes for
\[
  _2F_1\left(\frac{x}{p},\frac{y}p;\frac zp\right)
\] is bounded above as follows:
\[
  D\left(\frac xp,\frac yp;\frac zp\right) \leq \frac{1}{q}.
\]
In particular, if $q$ grows, then the density of bounded primes goes to zero.
\end{thm}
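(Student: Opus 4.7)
The plan is to show that the density $D$ exceeds $1/q$ only when $B = B(x/p, y/p; z/p)$ contains the subgroup $Q$ of quadratic residues modulo $p$, and then to rule this out using an unconditional observation when $z \in Q$ and Proposition \ref{p:intersection} when $z \notin Q$.

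First I would verify the reduction to the condition $Q \not\subseteq B$. In the divisor lattice of $p-1 = 2q^r$, every antichain is either empty, a singleton, or a pair $\{q^i, 2q^j\}$ with $i > j$, and inclusion-exclusion gives $|\langle u_{q^i}\rangle \cup \langle u_{2q^j}\rangle| = q^i + q^j$. A direct calculation using $q \geq 3$ shows that $|B|/(p-1) > 1/q$ if and only if some divisor in the defining antichain equals $q^r$ or $2q^r$, equivalently $B \supseteq \langle u_{q^r}\rangle = Q$. Since $B$ is closed under taking powers, $Q \subseteq B$ if and only if a fixed generator $u_Q$ of $Q$ lies in $B$, so it suffices to produce the failure $u_Q \notin B$.

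Second I would translate the condition $u_Q \in B$ into a statement about the sets $U_p(\cdot)$ of Section \ref{s:evidence}. Because $u_Q$ has odd order $q^r$, the set $\{-u_Q^j : j \in \ZZ\}$ equals the coset $-Q$, which (as $p \equiv 3 \pmod 4$) is exactly the set $N$ of nonquadratic residues mod $p$. Setting $\alpha \equiv x/z$ and $\beta \equiv y/z \pmod p$, the defining inequality $\{-u_Q^j c\} \leq \max(\{-u_Q^j a\}, \{-u_Q^j b\})$ becomes, after the substitution $w \equiv -u_Q^j z \pmod p$, the assertion that $w \notin U_p(\alpha) \cap U_p(\beta)$ for every $w \in zN$. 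So $u_Q \notin B$ if and only if $zN \cap U_p(\alpha) \cap U_p(\beta) \neq \emptyset$.

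Third I would split into two cases by the quadratic character of $z$. If $z \in Q$, then $zN = N$, and I claim $p-1$ always lies in $N \cap U_p(\alpha) \cap U_p(\beta)$: since $p = 2q^r + 1 \equiv 3 \pmod 4$, $-1$ is a nonquadratic residue, so $p - 1 \in N$; and for any $\alpha \in \{2, \ldots, p-1\}$ we have $[\alpha(p-1)]_p = p - \alpha < p - 1 = [p-1]_p$, so $p - 1 \in U_p(\alpha)$. The identical computation for $\beta$ settles this case unconditionally. If $z \notin Q$, then $zN = Q$, so the required nonemptiness reduces to $W_p(\alpha) \cap W_p(\beta) \neq \emptyset$, which is exactly Proposition \ref{p:intersection} applied to the pair $\alpha, \beta$, both coprime to $p$ and neither congruent to $1$ thanks to $x \neq z$ and $y \neq z$.

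The one nontrivial ingredient is Case B, whose threshold $p > N$ is inherited through Proposition \ref{p:intersection} from Burgess's bound (Theorem \ref{t:smallnp}); everything else is bookkeeping on the divisor poset of $2q^r$ together with the elementary verification that $p-1 \in U_p(\alpha)$. The final assertion that $D \to 0$ as $q \to \infty$ is then immediate from $D \leq 1/q$.
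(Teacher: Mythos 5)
Your proposal is correct and takes essentially the same route as the paper's proof: reduce $D>1/q$ to the generator of $Q$ lying in $B$, rewrite that condition as the nonexistence of $w\in zN$ with $w\in U_p(x/z)\cap U_p(y/z)$, dispose of the case $z\in Q$ unconditionally by taking $w\equiv -1\pmod p$, and invoke Proposition \ref{p:intersection} in the case $z\notin Q$ to get $W_p(x/z)\cap W_p(y/z)\neq\emptyset$ for $p$ large. The only differences are cosmetic: your antichain bookkeeping in the divisor lattice of $2q^r$ replaces the paper's explicit table of possible sets $B$ and densities, and you split on the quadratic character of $z$ where the paper splits on that of $-z$, which is the same dichotomy since $-1$ is a nonresidue.
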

\begin{proof}
  Observe that if $p = 2q^r+1$ is prime for an odd prime $q$, then $p \equiv 3 \pmod{4}$ and so $-1$ is not a quadratic residue mod $p$. Let $u \in (\ZZ/p\ZZ)^\times$ have order $q^r$, so that $u$ generates the group $Q$ of quadratic residues mod $p$. Let $B = B(x/p,y/p;z/p)$ denote the set of congruence classes mod $p$ representing bounded primes for our parameters, as in Theorem \ref{t:density}. Since $B$ is a union of cyclic subgroups, there are a limited number of possibilities for the form it can take, and for the corresponding densities of bounded primes. The cyclic subgroups of $(\ZZ/p\ZZ)^\times$ have the form $\langle u^{q^j}\rangle$ for $0 \leq j \leq r$ or $\langle -u^{q^j}\rangle$ for $0 \leq j \leq r$. Besides the obvious containments, we have $\langle u^{q^j}\rangle \subseteq \langle -u^{q^k}\rangle$ if and only if $k \leq j$. If $j < k$ then $\langle u^{q^j}\rangle \cap \langle -u^{q^k}\rangle = \langle u^{q^k}\rangle$ and
\[
  \abs{\langle u^{q^j}\rangle \cup \langle -u^{q^k}\rangle} = \abs{\langle u^{q^j}\rangle}+\abs{\langle -u^{q^k}\rangle}-\abs{\langle u^{q^k}\rangle} = q^{r-j}+2q^{r-k}-q^{r-k} = q^{r-j}+q^{r-k}.
\]
Therefore, we have the following possibilities for $B$ and the corresponding density of bounded primes:
\begin{center}
\begin{tabular}{|l|l|}
\hline
$B(a,b;c)$ & Density \\
\hline
$\emptyset$  & 0\\
$\langle u^{q^j}\rangle$ &  $1/2q^j$\\
$\langle -u^{q^k}\rangle$ & $1/q^k$\\
$\langle u^{q^j}\rangle \cup \langle -u^{q^k}\rangle$ ($\j < k$) & $(q^{k-j}+1)/(2q^{k})$\\
\hline
\end{tabular}
\end{center}
The problematically large densities satisfying
\[
  D(a,b;c) > \frac{1}{q}
\]
are the cases when $j = 0$ and $k = 0$ in the second and third rows, and the cases when $j = 0$ in the last row. That is, the problematic cases are:
\begin{center}
\begin{tabular}{|l|l|}
\hline
$B(a,b;c)$ & Density \\
\hline
$\langle u\rangle$ &  $1/2$\\
$\langle -u\rangle$ & $1$\\
$\langle u\rangle \cup \langle -u^{q^k}\rangle$ ($k > 0$) & $(q^{k}+1)/(2q^{k})$\\
\hline
\end{tabular}
\end{center}
Observe that the problematic cases are exactly those for which $u \in B$. Thus, we must produce a value of $N$ such that if $p > N$ then $u \not \in B$.

Theorem \ref{t:density} shows that $u \in B$ if and only if
\[\{-u^jz/p\} \leq \max(\{-u^jx/p\} ,\{-u^jy/p\})\]
for all integers $j$. Notice that $ \{-u^jz/p\}p = [-u^jz]_p$, and likewise for the other terms appearing above. As $j$ varies, the term $u^j$ varies over the set $Q$ of quadratic residues mod $p$. Therefore, we find that $u \in B$ if and only if
\[
  [-vz]_p \leq \max([-vx]_p,[-vy]_p)
\]
for all $v \in Q$. Thus, we must show that if $p$ is big enough, then we can find a quadratic residue $v \in Q$ such that $[-vz]_p > [-vx]_p$ and $[-vz]_p > [-vy]_p$.

The simplest case is if $-z$ is not a quadratic residue, for then we can choose $v$ such that $[-vz]_p = p-1$, since $p\equiv 3 \pmod{4}$ implies that $-1$ is not a quadratic residue mod $p$. Since $z \neq x,y$ then this implies that necessarily $p-1 > [-vx]_p$ and $p-1 > [-vy]_p$, as required. Observe that this does not require $p$ to be large.

It remains to consider the case when $-z \in Q$. After replacing $-vz$ by $v$, we see that we are reduced to finding $v \in Q$ such that $[v]_p > [vx/z]_p$ and $[v]_p > [vy/z]_p$. That is, we must find $v \in W_p(x/z) \cap W_p(y/z)$. Proposition \ref{p:intersection} supplies a value of $N$ which ensures that this is possible whenever $p > N$. This concludes the proof.
\end{proof}

\begin{rmk}
Let notation be as in Theorem \ref{t:specialcase} and write $D = D(x/p,y/p;z/p)$. If $p = 2q+1$ is prime with $q$ an odd prime then it's not hard to show that
\begin{enumerate}
\item $D = 0$ if and only if $c < a$ and $c < b$;
\item $D= 1/2q$ or $D = 1/2$ if and only if $a < c$ and $b < c$;
\item $D = 1/q$ or $D = (q+1)/(2q)$ if and only if $a < c < b$;
\item $D = 1$ never occurs.
\end{enumerate}
For such primes, computations suggest that $D \leq 1/q$ whenever $p \geq 59$. The bound of $p = 59$ above cannot be improved upon. For example,
\begin{align*}
  W_{47}(29) &= \{18,25,28,36\}, &  W_{47}(43) &= \{21,32,34,42\}, 
\end{align*}
and hence $W_{47}(29) \cap W_{47}(43) = \emptyset$. This can be used to produce examples of hypergeometric series with lots of bounded primes: unravelling the proof of Theorem \ref{t:specialcase} produces corresponding examples such as
\[
  D\left(\frac{4}{47},\frac{18}{47};\frac{46}{47}\right) = \frac{1}{2}. 
\]
Note that this density is indeed larger than the bound $1/q= 1/23$ from Theorem \ref{t:specialcase}. An effective version of Theorem \ref{t:specialcase} follows for primes $p\equiv 3\pmod{8}$ by Proposition \ref{p:effectiveintersection}, but an effective version for primes $p \equiv 7\pmod{8}$ remains an open problem.
\end{rmk}
\bibliographystyle{plain}

\end{document}